\newtheorem{theo}{Theorem}[section]
\theoremstyle{plain}
\newtheorem{cor}[theo]{Corollary}
\newtheorem{lemma}[theo]{Lemma}
\newtheorem{quest}[theo]{Question}
\newtheorem{proposition}[theo]{Proposition}
\numberwithin{equation}{section}
\begin{document}
\baselineskip=17pt 
\title[On proximal fineness of topological groups]{On proximal fineness of topological groups in their right uniformity}
\author{A. Bouziad}
\address[]
{D\'epartement de Math\'ematiques,  Universit\'e de Rouen, UMR CNRS 6085, 
Avenue de l'Universit\'e, BP.12, F76801 Saint-\'Etienne-du-Rouvray, France.}
\email{ahmed.bouziad@univ-rouen.fr}
\subjclass[2000]{54E15 (22A05 54H11);54D35 (54C10 54E15)}

\begin{abstract}  A uniform space $X$ is said to be 
proximally fine if every proximally continuous map on $X$ into a uniform is uniformly
continuous. We supply a proof that every topological group   which
is functionnaly generated by its precompact subsets is
proximally fine with respect to its  right uniformity.  On the other hand,  
we show that there are various permutation groups $G$ on the integers  $\mathbb N$
that are not proximally  fine with respect to the 
 topology generated by the sets $\{g\in G: g(A)\subset B\}$, $A,B\subset \mathbb N$.
 
\end{abstract} 
\maketitle
\vskip 4mm
\section{Introduction}

A function
$f:X\to Y$ between tow uniform spaces is said to be {\it proximally continuous} if for every bounded uniformly
continuous function  $g: Y\to \mathbb R$, 
the composition function $g\circ f: X\to \mathbb R$ is uniformly continuous; the reals $\mathbb R$
being equipped with the usual metric. 
A uniform space $X$
is said to be {\it proximally fine} if every proximally continuous function
defined on $X$ is uniformly continuous. It is well-known
that  metric spaces and all products of metric uniform spaces are proximally fine; however, there are many uniform spaces that are not  proximally fine  although they are topologically well-behaved (some may be locally compact and even discrete). We refer the reader to Hu\v sek's recent paper \cite{Hu} for more information
about proximally fineness of general uniform spaces.  We are interested here in the fine proximal condition of the topological groups when these spaces are endowed with their right uniformity. This subject seems to have no specific literature, although there are some questions which could have been naturally addressed  in this sitting, such as the Itzkowitz problem for  metric and/or locally compact groups \cite{I} (the link between  Itzkowitz's problem and  proximity
 theory was made later in \cite{BT2}). \par
 In view of the  close  relationship between  
 the right uniformity of topological groups and  the system
of neighborhoods of their identity, it is reasonable to expect that the proximal fineness  of
a given topological group $G$ is satisfied
 provided that a  not too much restrictive condition is imposed on  the topology of $G$.
 This feeling is heightened by Corollary 2.6   in this note  asserting 
 that   it is sufficient to assume
 that $G$ is functionally generated (in {Arkhangel'ski\v\i}'s sense) by its precompact subsets. In fact, this is still true under much less restrictive conditions (Theorem 2.4 below). Let us mention that one part  of Theorem 2.4  was asserted without proof in \cite{BT1}\footnote{The author is indebted to Professor Michael D. Rice for his interest in this result from \cite{BT1} and its proof, which motivated the present work.}. 
  The main  subject of this note can be examined in the context of $G$-sets (or $G$-spaces) without significantly altering its essence, so the main result
  is stated and established in a somewhat more general form (Theorem 2.5).\par
   Some examples
  of non-proximally fine groups are given in Section 3. To do that, we consider  the topology $\tau$ on $\mathbb N^{\mathbb N}$ of uniform convergence
  when (the target set) $\mathbb N$ is endowed with the Samuel uniformity. We show in Corollary 3.4 that
  for any permutation group $H$ on $\mathbb N$ for which all but finitely
  many orbits are finite and uniformly bounded, the only group topology
  on $H$ which finer than $\tau$ and proximally fine is the discrete topology.  The question of whether there  is an abelian (or at least SIN) group
  which is non-proximally fine group is
  left open.

 \section{Main result}
 For undefined terms  we refer to the books \cite{E} and \cite{RD}. One of the  tools used here is Katetov's extension theorem of uniformly continuous bounded real-valued functions \cite{K}. 
  The following  statement is also well-known (see\cite{E}); its proof is outlined here for the sake of completeness and because it can be adapted to show a result in the same spirit that will be used in the proof Theorem 2.5.
   As usual, if $(X,\mathcal U)$ is a uniform space, where $\mathcal U$
is the uniform neighborhoods of the diagonals of $X$, then for $A\subset X$ and $U\in\mathcal U$, $U[A]$ stands for the set of $y\in X$ such that
$(x,y)\in U$ for some $x\in A$. 
\begin{proposition}  Let $f:X\to Y$, where $(X,\mathcal U)$ and $(Y,\mathcal V)$ are two uniform spaces. Then the following are equivalent:
\begin{enumerate}
\item  $f: X\to Y$ is proximally continuous,
\item  for every $A\subset X$ and $V\in{\mathcal V}$, there exists $U\in{\mathcal
 U}$ such that
$f(U[A])\subset V[f(A)]$.
\end{enumerate}
\end{proposition}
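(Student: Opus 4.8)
The plan is to prove the two implications separately, using nothing beyond the definition of proximal continuity and the standard metrization lemma for uniform spaces (every entourage contains a set of the form $\{(y,y'):\rho(y,y')<1\}$ for some bounded uniformly continuous pseudometric $\rho$; see \cite{E}).

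For $(1)\Rightarrow(2)$ I would fix a nonempty $A\subset X$ and $V\in\mathcal V$, the case $A=\emptyset$ being trivial, and assume $V$ symmetric. Choosing a bounded uniformly continuous pseudometric $\rho$ on $Y$ with $\{(y,y'):\rho(y,y')<1\}\subset V$, the function $g(y)=\min(1,\rho(y,f(A)))$ is bounded and uniformly continuous on $Y$ and vanishes on $f(A)$. By proximal continuity, $g\circ f$ is uniformly continuous, so there is $U\in\mathcal U$ with $|g(f(x))-g(f(x'))|<1$ whenever $(x,x')\in U$. If $x\in U[A]$, pick $a\in A$ with $(a,x)\in U$; then $g(f(x))<g(f(a))+1=1$, hence $\rho(f(x),f(A))<1$, so $f(x)\in V[f(A)]$. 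Thus $f(U[A])\subset V[f(A)]$.

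For $(2)\Rightarrow(1)$, let $g:Y\to\mathbb R$ be bounded and uniformly continuous; rescaling, assume $0\le g\le 1$. Fix $\varepsilon>0$ and choose a symmetric $V\in\mathcal V$ with $|g(y)-g(y')|<\varepsilon$ whenever $(y,y')\in V$. Cover $[0,1]$ by the finitely many half-open intervals $I_k=[k\varepsilon,(k+1)\varepsilon)$ that meet it, and set $A_k=f^{-1}(g^{-1}(I_k))$, so $X=\bigcup_k A_k$. For each $k$, hypothesis (2) yields $U_k\in\mathcal U$ with $f(U_k[A_k])\subset V[f(A_k)]$; let $U=\bigcap_k U_k$, a finite intersection, which I may take symmetric. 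If $(x,x')\in U$, then $x\in A_k$ for some $k$, so $x'\in U_k[A_k]$ and $f(x')\in V[f(A_k)]$; picking $b\in A_k$ with $(f(b),f(x'))\in V$ gives $|g(f(b))-g(f(x'))|<\varepsilon$, while $g(f(x))$ and $g(f(b))$ both lie in $I_k$, whence $|g(f(x))-g(f(x'))|<2\varepsilon$. As $\varepsilon>0$ was arbitrary, $g\circ f$ is uniformly continuous.

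I expect the only delicate point to be the choice of test function in $(1)\Rightarrow(2)$: one needs a single bounded uniformly continuous function on $Y$ whose zero set contains $f(A)$ and whose sublevel set $\{g<1\}$ stays inside $V[f(A)]$, and it is exactly the metrization lemma that provides it. The converse is essentially a finiteness argument, the boundedness of $g$ being what allows one to reduce to finitely many pieces $A_k$ so that the corresponding $U_k$ can be intersected within $\mathcal U$; the same packaging is what should later adapt to the situation needed in Theorem 2.5.
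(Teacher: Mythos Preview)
Your proof is correct and follows essentially the same route as the paper's. For $(1)\Rightarrow(2)$ the argument is identical (the paper uses the distance function $\phi(y)=d(y,f(A))$ without truncation, but this is cosmetic); for $(2)\Rightarrow(1)$ the paper covers $Y$ by finitely many $V$-balls $V[z]$, $z\in F$, using boundedness of $\phi$ to obtain a finite $V$-net, whereas you partition the range $[0,1]$ into finitely many intervals---the same finiteness idea in slightly different packaging, yielding the same $2\varepsilon$ estimate.
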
 
\begin{proof} To show that 1) implies 2), let $d$ be a bounded uniformly continuous pseudometric on $Y$
such that $(x,y)\in V$ whenever $d(x,y)<1$ (\cite{E}) and consider  the uniformly continuous function $\phi$ on $Y$
defined by $\phi(y)=d(y, f(A))$. Since $\phi\circ f$ is uniformly continuous,
there is $U\in\mathcal U$ such that $(x,y)\in U$ implies $|\phi(f(x)),\phi(f(y))|<1$. Then $f(U[A])\subset V[f(A)]$.\par
For the converse, let $\phi: Y\to\mathbb R$ be a bounded uniformly continuous
function and let $\varepsilon>0$.
Define $V=\{(x,y)\in Y\times Y:|\phi(x)-\phi(y)|<\varepsilon/2\}$. Then
$V\in\mathcal V$ and since $\phi$ is bounded, there is a finite set $F\subset Y$ so that $Y=V[F]$.  Let $A_z=f^{-1}(W[z])$, $z\in F$, and choose $U\in\mathcal U$ such that $f(U[A_z])\subset W[f(A_z)]$ for each $z\in F$.
Then $|\phi\circ f(x)-\phi\circ f(y)|\leq\varepsilon$ for each $(x,y)\in U$.
\end{proof}  
Let us say that a topological group $G$ is {\it proximally fine} if the uniform space $(G,{\mathcal U}_r)$
is proximally fine, where ${\mathcal U}_r$
is the right uniformity of $G$. It is equivalent to say that $(G,{\mathcal U}_l)$
is proximally fine, where ${\mathcal U}_l$ is the left uniformity of $G$. Recall
that a basis of ${\mathcal U}_r$ (respectively, ${\mathcal U}_l$) is
given by the sets of the form $\{(g,h)\in G\times G: gh^{-1}\in V\}$  (respectively, $\{(g,h)\in G\times G: g^{-1}h\in V\}$) as $V$ runs over the set ${\mathcal V}(e)$
of neighborhoods of the unit $e$ of $G$. 
 
\begin{proposition} Let  $G$ be a topological group,  $(Y,\mathcal U)$ a  uniform space and
let $f:G\to Y$ be a  function. For $g\in G$, let $\psi_g: G\to Y$ 
be the function defined by $\psi_g(h)=f(gh)$. Then, the following are equivalent:
\begin{enumerate}
\item $f$ is uniformly continuous, $G$ being equipped with the right uniformity,
\item the function $\psi:g\in G\to \psi_g\in Y^G$  is continuous when $Y^G$
is endowed with the uniformity of uniform convergence on $G$.
\end{enumerate}
\end{proposition}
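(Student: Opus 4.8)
The plan is to unwind both conditions into statements about neighbourhoods of $e$ and then match them up directly; no deep machinery is needed beyond the explicit description of the two uniformities involved and the fact that right translations in $G$ are homeomorphisms. Because $f$ is merely a function (no continuity assumed), $\psi_g\in Y^G$ is well defined for each $g$, so the only content is the behaviour of $g\mapsto\psi_g$.

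First I would record the relevant bases. A basic entourage of $\mathcal U_r$ is $U_V=\{(g,h): gh^{-1}\in V\}$ with $V\in\mathcal V(e)$; writing $g=vh$ with $v=gh^{-1}$, this shows that $f\colon G\to Y$ is uniformly continuous for the right uniformity precisely when, for every $W\in\mathcal U$, there is $V\in\mathcal V(e)$ such that $(f(vh),f(h))\in W$ for all $v\in V$ and all $h\in G$. On the other side, a basic entourage of the uniformity of uniform convergence on $Y^G$ is $\widetilde W=\{(\phi,\chi): (\phi(h),\chi(h))\in W \text{ for all } h\in G\}$, $W\in\mathcal U$; hence $\psi$ is continuous at a point $g_0\in G$ iff, for every $W\in\mathcal U$, there is a neighbourhood $N$ of $g_0$ with $(f(gh),f(g_0h))\in W$ for all $g\in N$ and all $h\in G$.

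For $(2)\Rightarrow(1)$: if $\psi$ is continuous it is in particular continuous at $e$, and continuity at $e$ (taking $g_0=e$, $N=V\in\mathcal V(e)$) is literally the condition above characterising right uniform continuity of $f$. For $(1)\Rightarrow(2)$: given $W\in\mathcal U$, choose $V\in\mathcal V(e)$ with $(f(vh'),f(h'))\in W$ for all $v\in V$, $h'\in G$. Fix $g_0\in G$ and put $N=Vg_0$, which is a neighbourhood of $g_0$ since right translation by $g_0$ is a homeomorphism of $G$. For $g\in N$ write $g=vg_0$ with $v\in V$; then for every $h\in G$, applying the chosen estimate with $h'=g_0h$ yields $(f(gh),f(g_0h))=(f(vg_0h),f(g_0h))\in W$, i.e.\ $(\psi_g,\psi_{g_0})\in\widetilde W$. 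Thus $\psi$ is continuous at the arbitrary point $g_0$.

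The one point that requires care — and the natural place to slip — is the side on which the translation is applied: one must use the neighbourhood $Vg_0$ of $g_0$ together with the substitution $h'=g_0h$, not $g_0V$, since the latter choice would force $v$ and $g_0$ to commute and fails in a general topological group. (Equivalently, one can phrase this by noting that the maps $\phi\mapsto\phi\circ L_{g_0}$ are uniform automorphisms of $Y^G$ for the uniformity of uniform convergence and that $\psi_{gg_0}=\psi_g\circ L_{g_0}$, reducing continuity everywhere to continuity at $e$.) Everything else is a direct transcription of the definitions, so I expect the write-up to be short.
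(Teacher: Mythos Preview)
Your proof is correct and follows essentially the same route as the paper: both directions are handled by unwinding the definitions, using the neighbourhood $Vg_0$ of a given point $g_0$ for $(1)\Rightarrow(2)$ and continuity of $\psi$ at $e$ for $(2)\Rightarrow(1)$. Your explicit remark about why $Vg_0$ rather than $g_0V$ is required is a useful addition not spelled out in the paper.
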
 
\begin{proof} To show that (1) implies (2), 
 suppose that $f$ is right uniformly continuous and let $U\in\mathcal U$.  There
is a neighborhood $V$ of $e$ such that $xy^{-1}\in V$  implies $(f(x),f(y))\in U$. Let $g\in G$. Then
$Vg$ is a neighborhood of $g$ in $G$ and 
for each $h\in Vg$ and $x\in G$ we have $hx(gx)^{-1}\in V$ hence $(f(hx),f(gx))\in U$, that is,
$(\psi_h(x),\psi_g(x))\in U$.  \par
To  show  that (2) implies (1), let  $U\in{\mathcal U}$ and choose $V\in\mathcal V(e)$
such that $(\psi_g(x),\psi_e(x))\in U$ for every $g\in V$ and $x\in G$. Then, for every
$g,h\in G$ such that $h\in Vg$ we have 
$(\psi_{hg^{-1}}(x),\psi_e(x))\in U$ for each $x\in G$, equivalently, $(f(hx),f(gx))\in U$, for every $x\in G$.
\end{proof}

It is possible to expand substantially  the framework of the starting topic of this note without major changes as follows: Let $G$ be a topological group and let $X$ be a $G$-set, that is, $X$ is  a nonempty 
set for which there is 
a map $*:G\times X\to X$  satisfying $(gh)*x=g*(h*x)$
for every $g,h\in G$ and $x\in X$. The function
$*$ is called a left action of $G$ on $X$. To simplify, write $gx$ in place of $g*x$
and $UA$ in place of $U*A$ if $U\subset G$ and $A\subset X$. No topology will be  required on $X$. Let $(Y,\mathcal V)$
be a uniform space and $f: X\to Y$ a function. It is  consistent with the definitions given above to say that a  $f$ is {\it right
uniformly continuous} if for each $V\in \mathcal V$, there is $U\in{\mathcal V}(e)$
such that $(f(gx),f(hx))\in V$ for each $x\in X$,  whenever $gh^{-1}\in U$. 
Similarly, the function $f$ is said to be {\it right proximally continuous}  if for each bounded uniformly continuous function $\phi: Y\to\mathbb R$,
the function $\phi\circ f$ is right uniformly continuous.  It is easy to check (see the proof of Proposition 2.2)  
that $f$ is right uniformly continuous iff the function $\psi: g\in G\to f(gx)\in Y^X$
 is continuous when $Y^X$ is endowed with the uniform convergence. Similarly, a simple adaptation
 of the proof of Proposition 2.1 shows that
$f$ is right proximally continuous iff for each $A\subset GX$ and $V\in\mathcal V$, there
is $U\in\mathcal V(e)$ such that $f(UA)\subset V[f(A)]$.\par
 The following properties are required for  Theorem 2.5;  we are formulating them separately to reduce the proof to
  its essential components. 
 Let $X$ be a $G$-set and  $f: X\to Y$ 
a  right proximally continuous, as defined above. Then, for each $A\subset G$:
\begin{itemize}
\item[{(c1)}] for every $x\in X$, the function $g\in G\to f(gx)\in Y$
is continuous,
\item[{(c2)}] if  $\psi_{{|A}}$ is right
uniformly continuous, then $\psi_{{|\overline A}}$ is right uniformly continuous,
\item[{(c3)}] if $a\in  A$  is a point of continuity of ${\psi}_{|A}$,
 then $a$ is a point of continuity of ${\psi}_{|\overline A}$.
 \end{itemize} 
 
 We check the validity of these properties for the benefit of the reader. Let  $V\in\mathcal V$. For every $x\in X$ and $g\in G$,  there is $U\in\mathcal V(e)$
 such that $f(Vgx)\subset V[f(gx)]$.   Since $Vg$ is a neighborhood of $g$ in $G$, (c1) holds. 
 Property (c2) and (c3) follows from  (c1). For, 
if $x\in X$, $U\in\mathcal V(e)$   are such that $(f(ax),f(bx))\in V$ for
every $a,b\in A$ with $a\in UUb$, then (c1) implies that $f(gx),f(gx))\in V^2$ for
each $g,h\in \overline A$ such that $g\in Uh$. Taking $x$
arbitrary in $X$ gives (c2). For (c3), 
 let $U$ be an open neighborhood of the unit  in $G$  such that $(f(gx),f(ax))\in  V$ for ever $g\in Ua\cap A$ and $x\in X$.
 Since $Ua\cap \overline A\subset \overline{Ua\cap A}$, it follows from (c1) that
for every $g\in Ua\cap\overline A$ and $x\in X$, $(f(gx),f(ax))\in V^2$.
 \par
\vskip 2mm
To establish
Theorem 2.5  we also need the next key lemma; this  is a  well-known  tool
 in the theory of proximity spaces (most often with $W^4$ instead of $W^3$).
\begin{lemma} Let  $X$ be a set and $W$  be a symmetric binary relation on  $X$. 
Then, for every
 infinite cardinal  $\eta$ and for every 
sequence $(x_n,y_n)_{n<\eta}\subset X\times X$
such that
 $(x_n, y_n)\not\in W^3$ for each $n<\eta$, there is a cofinal set $A\subset \eta$ such that
 $(x_n, y_m)\not\in W$ for every $n,m\in A$.
\end{lemma}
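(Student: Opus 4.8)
The plan is to construct, by transfinite recursion, a strictly increasing cofinal sequence $(a_\xi)$ in $\eta$ with $(x_{a_\xi},y_{a_\zeta})\notin W$ and $(x_{a_\zeta},y_{a_\xi})\notin W$ for all $\zeta<\xi$; then its range $A$ is as required, because $(x_n,y_n)\notin W$ holds automatically ($W$ being symmetric, $W\subset W^3$ via the chain $(a,b),(b,a),(a,b)$). Before starting I would reduce to the case that $\eta$ is a regular cardinal: passing to a cofinal subset of $\eta$ of order type $\mathrm{cf}(\eta)$ preserves the hypothesis on the (sub)sequence, and a set cofinal in that subset is cofinal in $\eta$.

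Assume then that $\eta$ is regular. At stage $\xi<\eta$, with $(a_\zeta)_{\zeta<\xi}$ already chosen, regularity gives $s_\xi:=\sup\{a_\zeta:\zeta<\xi\}<\eta$, so there is room above $s_\xi$, and the step fails only if every ordinal of the interval $(s_\xi,\eta)$ lies in one of the $2\,|\xi|<\eta$ sets $\{\beta:(x_\beta,y_{a_\zeta})\in W\}$ or $\{\beta:(x_{a_\zeta},y_\beta)\in W\}$, $\zeta<\xi$. If the recursion never fails, then $A=\{a_\xi:\xi<\eta\}$ has cardinality $\eta$, hence is unbounded in $\eta$, hence cofinal, and we are done. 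Otherwise it stalls at some stage $\xi$; since $\eta$ is regular and $(s_\xi,\eta)$ has cardinality $\eta$ while being covered by fewer than $\eta$ of those sets, one of them is cofinal in $\eta$. Thus there are a point $p\in X$ and a cofinal $B\subset\eta$ such that either $(x_\beta,p)\in W$ for all $\beta\in B$, or $(p,y_\beta)\in W$ for all $\beta\in B$.

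I claim that in the first alternative, $B$ itself already works. Let $\beta,\beta'\in B$. Since $(x_{\beta'},p)\in W$ and $(x_{\beta'},y_{\beta'})\notin W^3$, we get $W[p]\cap W[y_{\beta'}]=\emptyset$: a common point $r$ would, together with $(x_{\beta'},p)\in W$ and the symmetry of $W$, give the chain $(x_{\beta'},p),(p,r),(r,y_{\beta'})\in W$, i.e. $(x_{\beta'},y_{\beta'})\in W^3$. So if $(x_\beta,y_{\beta'})\in W$, then (using also $(x_\beta,p)\in W$ and symmetry) $x_\beta\in W[p]\cap W[y_{\beta'}]$, a contradiction; hence $(x_\beta,y_{\beta'})\notin W$, and exchanging $\beta$ and $\beta'$ gives $(x_{\beta'},y_\beta)\notin W$ as well. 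The second alternative is handled symmetrically, interchanging the roles of the $x_n$ and the $y_n$ (note $W^3$ is a symmetric relation, so $(y_n,x_n)\notin W^3$).

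The real obstacle is the dichotomy at a stalled stage: recognising that failure of the greedy step forces a single point $p$ that is $W$-close to cofinally many of the $x_\beta$ (or of the $y_\beta$), and then that such a cluster $B$ is automatically $W$-separated from the opposite family. This last implication is exactly where the hypothesis $(x_n,y_n)\notin W^3$ --- rather than merely $(x_n,y_n)\notin W^2$ --- is used: to pass from $x_\beta\in W[p]$ to $x_\beta\notin W[y_{\beta'}]$ one must know that $p$ and $y_{\beta'}$ cannot be linked by two consecutive steps of $W$.
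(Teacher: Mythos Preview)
Your proof is correct and follows essentially the same route as the paper's: reduce to regular $\eta$, attempt to build the desired set directly (you by greedy transfinite recursion, the paper by choosing a maximal $M$ via Zorn), and upon failure extract a cofinal set of indices all $W$-linked to a single pivot, which then satisfies the conclusion by the $W^3$ hypothesis. The only difference is cosmetic --- recursion versus maximality --- and the endgame argument is identical.
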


\begin{proof} Replacing $\eta$ by its cofinality
cf$(\eta)$, we may suppose that $\eta$ is regular. Let $M\subset \eta$ be a maximal set satisfying $(x_n,y_m)\not\in W$ for
every $n,m\in M$. If $M$ is cofinal in $\eta$, the proof is finished, so
suppose that $M$ is not  cofinal in $\eta$.
For each  $j\in M$, let
$A_j=\{n< \eta:(x_n,y_j)\in W\}$, 
$B_j=\{n<\eta:(x_j,y_n)\in W\}$, $C_j=A_j\cup B_j$ and  $C=\cup_{j\in M}C_j$.
The maximality of $M$ implies
that  $\eta\subset   M \cup  C$.  
Since  $\eta$ is regular,    
there is $j\in M$ such that $C_j$ is cofinal in  $\eta$,   therefore
 $A_j$ or $B_j$ is
cofinal in  $\eta$.  We suppose that it is
 $A_j$, the other case is similar. Let $n,m\in A_j$. Then  
 $(x_n,y_j)\in W$ and $(x_m,y_j)\in W$, hence  $(x_n,y_m)\not\in W$ since $(x_m,y_m)\not\in W^3$ (recall
 that $W$ is symmetric). Similarly,  $(x_m,y_n)\not\in W$. 
\end{proof}
 We will now specify a few topological concepts that will be used in what follows. The first is a variant of Herrlich's notion of radial spaces \cite{He}. Radial  spaces were  characterized by {A.V. Arhangel'ski\v \i} \cite{Av2}  as follows: A space $X$ is radial if and only if for each $x\in X$ and $A\subset X$ such that $x\in\overline A$, there is  $B\subset A$ of regular cardinality $|B|$ such that $x\in \overline C$ for every $C\subset B$ having the same cardinality as $B$. Let us say that
a subset $A$  of $X$   is  {\it relatively o-radial} in $X$ if for every collection
$(O_i)_{i\in I}$ of open sets in $X$ and $x\in A$ such that 
$$x\in\overline{\cup_{i\in I}O_i\cap A}\setminus \cup_{i\in I}{\overline{O_j}},$$
there is a set $J\subset I$ of regular  cardinality such that
$x\in \overline{\cup_{j\in L}O_j}$ whenever $L\subset J$
and $|L|=|J|$. If the set $J$ can always  be chosen countable, then $A$
is said to be {\it relatively o-Malykhin} in $X$. All closures are taken in $X$. \par
Every almost metrizable (in particular, \v Cech-Complete) group is o-Malykhin (in itself). More generally,
every inframetrizable group \cite{RD} is o-Malykhin, see \cite{BT2}. \par
Following {Arhangel'ski\v \i} \cite{Av1}, the space
$X$ is said to be      {\it strongly functionally generated} (respectively, {\it functionally  generated}) by a collection              
${\mathcal  M}$ of subsets of $X$ if for every discontinuous
 function $f:X\to\mathbb R$, there exists
$A\in\mathcal M$ such that
the restriction  $f_{|A}:A\to\mathbb R$ of $f$ to 
the subspace $A$ of $X$ is discontinuous (respectively, has no  continuous extension to $X$).\par
The following  is the main result of this note. The statement corresponding to the case (2) was asserted (without proof) in \cite{BT1}.
\begin{theo} Let  $G$ be a topological group satisfying at least one
of the following:
\begin{itemize}
\item[{\rm (1)}] $G$    is functionally
generated by the sets $\overline{A}\subset G$ such that $AA^{-1}$ is relatively o-radial in $G$,
\item[{\rm (2)}] $G$    is strongly functionally
generated by the sets $A\subset G$ such that $A$ is relatively o-radial in $G$.
\end{itemize}
 Then $G$  is  proximally fine.
\end{theo}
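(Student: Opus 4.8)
The plan is to argue by contradiction. Suppose $f:G\to Y$ (with $(Y,\mathcal V)$ a uniform space) is proximally continuous but not right uniformly continuous; viewing $G$ as a $G$-set under left translation, all the machinery above is available — the characterisation $f(US)\subseteq W[f(S)]$ of (right) proximal continuity, and properties (c1)–(c3) for $\psi_g:h\mapsto f(gh)$. The first step replaces ``$f$ is not uniformly continuous'' by a concrete scalar obstruction. For a bounded uniformly continuous pseudometric $d$ on $Y$ put $F_d:G\to\mathbb R$, $F_d(g)=\sup_{h\in G}d(f(gh),f(h))$, and $\Phi_d:Y^G\to\mathbb R$, $\Phi_d(\xi)=\sup_{h\in G}d(\xi(h),f(h))$. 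A one‑line triangle‑inequality estimate shows that $\Phi_d$ is bounded and uniformly continuous on $Y^G$ for the uniformity of uniform convergence and that $F_d=\Phi_d\circ\psi$; since moreover every entourage of $Y$ contains $\{d<1\}$ for a suitable such $d$, evaluating $F_d$ at $g=ab^{-1}$ shows at once that $f$ is right uniformly continuous iff $F_d$ is continuous at $e$ for \emph{every} $d$. So under our hypothesis there is a $d$ for which $F:=F_d=\Phi_d\circ\psi$ is a discontinuous real function on $G$, already discontinuous at $e$.

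The heart of the matter is the following, which I would isolate as a lemma: if $A\subseteq G$ is such that $A$ (in case (2)), respectively $AA^{-1}$ (in case (1)), is relatively o‑radial in $G$, then $\psi_{|A}$ is continuous, respectively right uniformly continuous. I would prove this by a double extraction, illustrated for case (1). Assume $\psi_{|A}$ is not right uniformly continuous, witnessed by $V_0\in\mathcal V$; fix a symmetric open $W\in\mathcal V$ with $W^{6}\subseteq V_0$, and over a neighbourhood base $(U_i)_{i\in I}$ at $e$ choose $a_i,b_i\in A$ with $c_i:=a_ib_i^{-1}\in U_i\setminus\{e\}$ and $h_i\in G$ with $(f(a_ih_i),f(b_ih_i))\notin V_0$. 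Using (c1) — continuity of $\gamma\mapsto f(\gamma\,b_ih_i)$ at $c_i$, where $c_ib_ih_i=a_ih_i$ — together with regularity of $G$, choose an open $O_i\ni c_i$ with $e\notin\overline{O_i}$ and small enough that $f(\gamma\,b_ih_i)\in W[f(a_ih_i)]$ for every $\gamma\in O_i$. Since $c_i\in O_i\cap AA^{-1}$ with $c_i\to e$, we get $e\in\overline{\bigcup_iO_i\cap AA^{-1}}\setminus\bigcup_i\overline{O_i}$, so relative o‑radiality of $AA^{-1}$ supplies $J\subseteq I$ of regular cardinality $\eta$ with $e\in\overline{\bigcup_{i\in L}O_i}$ whenever $L\subseteq J$, $|L|=\eta$. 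Enumerating $J=\{j_n:n<\eta\}$ and applying Lemma 2.3 to the symmetric relation $W^{2}$ on $Y$ and the sequence $\bigl(f(a_{j_n}h_{j_n}),f(b_{j_n}h_{j_n})\bigr)_{n<\eta}$ (each term lies outside $V_0$, which contains $(W^{2})^{3}=W^{6}$) yields a cofinal $A'\subseteq\eta$, still of size $\eta$, with $(f(a_{j_n}h_{j_n}),f(b_{j_m}h_{j_m}))\notin W^{2}$ for all $n,m\in A'$. Now apply the proximal‑continuity characterisation with $S=\{b_{j_n}h_{j_n}:n\in A'\}$, getting $U\in\mathcal V(e)$ with $f(US)\subseteq W[f(S)]$; since $|A'|=\eta$, relative o‑radiality gives $e\in\overline{\bigcup_{n\in A'}O_{j_n}}$, so $U$ meets some $O_{j_n}$ with $n\in A'$, and a point $g\in U\cap O_{j_n}$ satisfies both $(f(a_{j_n}h_{j_n}),f(g\,b_{j_n}h_{j_n}))\in W$ (choice of $O_{j_n}$) and, because $g\,b_{j_n}h_{j_n}\in US$, $(f(g\,b_{j_n}h_{j_n}),f(b_{j_m}h_{j_m}))\in W$ for some $m\in A'$; hence $(f(a_{j_n}h_{j_n}),f(b_{j_m}h_{j_m}))\in W^{2}$, a contradiction. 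Case (2) runs the same way, with $b_i\in U_ia\cap A\setminus\{a\}$, open $O_i\ni b_i$ with $a\notin\overline{O_i}$ and $f(\gamma h_i)\in W[f(b_ih_i)]$ on $O_i$, relative o‑radiality of $A$ at the point $a$, and $S=\{ah_{j_n}:n\in A'\}$.

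With the lemma in hand both cases of the theorem close quickly. In case (2), strong functional generation applied to the discontinuous $F$ produces a relatively o‑radial $A\subseteq G$ with $F_{|A}$ discontinuous; but the lemma makes $\psi_{|A}$ continuous, so $F_{|A}=\Phi_d\circ\psi_{|A}$ is continuous — a contradiction. In case (1), functional generation produces $A$ with $AA^{-1}$ relatively o‑radial and $F_{|\overline A}$ having no continuous extension to $G$; but the lemma makes $\psi_{|A}$ right uniformly continuous, hence so is $\psi_{|\overline A}$ by (c2), hence $F_{|\overline A}=\Phi_d\circ\psi_{|\overline A}$ is bounded and right uniformly continuous on $\overline A$, and Katetov's extension theorem \cite{K} extends it to a bounded, hence continuous, function on $G$ — again a contradiction. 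In either case $F$ turns out to be continuous, contradicting its discontinuity at $e$; thus $f$ is right uniformly continuous, and $G$ is proximally fine.

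The delicate point is the order of quantifiers in the core lemma. The open sets $O_i$ must be fixed and shrunk \emph{in advance}, via (c1), so as to control $f$ along the fixed translates $b_ih_i$ (resp.\ $h_i$); only afterwards do relative o‑radiality and Lemma 2.3 produce, in this order, the regular cardinal $\eta$, the index set $J$, and the cofinal $A'$; and only then does proximal continuity, applied to the set $S$ indexed by $A'$, deliver the single neighbourhood $U$ that has to be matched against the radial conclusion $e\in\overline{\bigcup_{n\in A'}O_{j_n}}$ in order to locate the point $g$ that simultaneously witnesses the failure and lies in $US$. Lining up the powers of $W$ — so that the $W^{2}$‑closeness arising from the two sources contradicts the ``$\notin W^{2}$'' produced by Lemma 2.3 — and working only with the orbit maps $g\mapsto f(gx)$, which are continuous by (c1) even though $f$ itself need not be, are the remaining points that require care.
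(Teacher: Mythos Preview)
Your proof is correct and follows essentially the same route as the paper's: the reduction to a scalar obstruction $F=\Phi_d\circ\psi$ is just an explicit instance of the paper's appeal to a bounded uniformly continuous $\theta:Y^G\to\mathbb R$, and your ``core lemma'' is exactly the argument of Theorem~2.5, with the same use of (c1)--(c2), relative o-radiality, Lemma~2.3 applied to $W^2$ against the $W^6$-gap, and proximal continuity on the set $S$. The only cosmetic differences are that you shrink $O_i$ via regularity to force $e\notin\overline{O_i}$ (the paper takes $O_V=\{g:(f(gh_V),f(a_Vh_V))\in W\}$ and \emph{proves} $e\notin\overline{O_V}$ from the $W^6$-gap), and in case~(2) you stay centered at $a$ (so the neighbourhood meeting $\bigcup_{n\in A'}O_{j_n}$ is $Ua$, with $g=ua$ giving $gh_{j_n}\in US$) whereas the paper translates to $e$ via $Aa^{-1}$.
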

According to Proposition 2.2 and keeping the above notations,  Theorem 2.4 is obtained from the following general result
by considering the left action of $G$ on itself. 
\begin{theo}   Let $G$ be a topological group and  suppose that for each
discontinuous bounded function $\alpha: G\to \mathbb R$, there is a set $A\subset G$ having at least one of the following conditions:
\begin{itemize}
\item[{\rm (1)}] $\alpha_{|\overline A}$ has no continuous extension to $G$ and $AA^{-1}$ is relatively o-radial in $G$,
\item[{\rm (2)}] $\alpha_{|\overline A}$ is discontinuous at some point of $A$ and  $A$ is relatively o-radial in $G$.
\end{itemize}
 Let $X$ be a $G$-set, $(Y,\mathcal V)$  a uniform space 
and let $f: X\to Y$ be a right proximally continuous. Then $f: G\to X$   is right uniformly continuous.
\end{theo}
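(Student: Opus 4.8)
The plan is to argue by contradiction. By the $G$-set version of Proposition 2.2 recorded above, $f$ is right uniformly continuous as soon as the map $\psi\colon G\to Y^X$, $\psi(g)(x)=f(gx)$, is continuous at $e$ (where $Y^X$ carries the uniformity of uniform convergence), so I assume it is not; after replacing $X$ by $e\cdot X$ we may also assume $e$ acts as the identity on $X$, which legitimises writing $f(x)$ for $f(ex)$ and the reduction $gx=(gh^{-1})(hx)$. Thus discontinuity of $\psi$ at $e$ yields a symmetric $V\in\mathcal V$ such that every neighbourhood $U$ of $e$ contains some $g$ with $(f(gx),f(x))\notin V$ for some $x\in X$. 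Fixing a bounded uniformly continuous pseudometric $d$ on $Y$ with $\{d<1\}\subseteq V$, I set $\alpha(g)=\sup_{x\in X}\min\{1,d(f(gx),f(x))\}$. Then $\alpha\colon G\to[0,1]$ is bounded and discontinuous at $e$ (it vanishes there but equals $1$ at the points $g$ above), and it obeys the bridging identity $\alpha(g)=\rho(\psi(g),\psi(e))$, where $\rho(u,v)=\sup_x\min\{1,d(u(x),v(x))\}$ is a bounded uniformly continuous pseudometric on $Y^X$. Applying the hypothesis to $\alpha$ produces a set $A\subseteq G$ satisfying (1) or (2), and the whole point is to rule out both, which contradicts the hypothesis and forces $\psi$ to be continuous at $e$.

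The technical core is a localisation step that I would isolate once: suppose $c\in G$, $S\subseteq G$ is relatively o-radial in $G$ with $c\in S$, $W\in\mathcal V$ is symmetric, and for every neighbourhood $U$ of $e$ there are $g_U\in Uc\cap S$ and $x_U\in X$ with $(f(g_Ux_U),f(cx_U))\notin W^{6}$; then a contradiction with the right proximal continuity of $f$ ensues. Indeed $g_U\neq c$, so by (c1) one can choose an open $O_U$ with $g_U\in O_U\subseteq Uc$, $f(O_Ux_U)\subseteq W[f(g_Ux_U)]$ and $c\notin\overline{O_U}$ (using that a Hausdorff topological group is regular). Then $c\in\overline{\bigcup_UO_U\cap S}\setminus\bigcup_U\overline{O_U}$, so relative o-radiality supplies a subfamily indexed by a set $J$ of regular cardinality $\eta=|J|$ with $c\in\overline{\bigcup_{j\in L}O_j}$ whenever $L\subseteq J$ and $|L|=\eta$. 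Enumerating $J=\{j_\xi:\xi<\eta\}$ and applying Lemma 2.3 to the pairs $(f(g_{j_\xi}x_{j_\xi}),f(cx_{j_\xi}))$ with the symmetric relation $W^{2}$ (whose third power is $W^{6}$) extracts a cofinal $B\subseteq\eta$, necessarily of size $\eta$, with $(f(g_{j_\xi}x_{j_\xi}),f(cx_{j_\zeta}))\notin W^{2}$ for all $\xi,\zeta\in B$. On the other hand, right proximal continuity applied to $\{cx_{j_\zeta}:\zeta\in B\}$ and to $W$ gives $U^{*}\in\mathcal V(e)$ with $f(U^{*}cx_{j_\xi})\subseteq W[\{f(cx_{j_\zeta}):\zeta\in B\}]$ for every $\xi\in B$; and since $c\in\overline{\bigcup_{\xi\in B}O_{j_\xi}}$, the neighbourhood $U^{*}c$ of $c$ meets some $O_{j_\xi}$ with $\xi\in B$ at a point $h=uc$, $u\in U^{*}$. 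Reading $f(hx_{j_\xi})$ once through $h\in O_{j_\xi}$ and once through $hx_{j_\xi}\in U^{*}cx_{j_\xi}$ produces $\xi,\zeta\in B$ with $(f(g_{j_\xi}x_{j_\xi}),f(cx_{j_\zeta}))\in W^{2}$, the contradiction.

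Case (2) is then quick: if $\alpha_{|\overline A}$ is discontinuous at $a\in A$, the identity $\alpha=\rho(\psi,\psi(e))$ and continuity of $\rho(\cdot,\psi(e))$ force $\psi_{|\overline A}$ to be discontinuous at $a$, hence by (c3) $\psi_{|A}$ is discontinuous at $a$; unwinding this discontinuity is precisely the input of the localisation step with $c=a$ and $S=A$, so it is impossible. For case (1), one first shows $\psi_{|A}$ is right uniformly continuous: if not, there are $b_U,c_U\in A$ with $b_Uc_U^{-1}\in U$ and $(f(b_Ux_U),f(c_Ux_U))\notin V'$ for some $x_U$, and putting $d_U=b_Uc_U^{-1}\in U\cap AA^{-1}$, $z_U=c_Ux_U$ gives $(f(d_Uz_U),f(z_U))\notin V'$, which is the input of the localisation step with $c=e$ and $S=AA^{-1}$ — impossible. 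Hence $\psi_{|A}$, and so by (c2) also $\psi_{|\overline A}$, is right uniformly continuous; composing with the uniformly continuous $\rho(\cdot,\psi(e))$ shows $\alpha_{|\overline A}$ is a bounded uniformly continuous real function on the subspace $\overline A$ of $(G,\mathcal U_r)$, so by Katetov's extension theorem it extends to a bounded uniformly continuous, in particular continuous, function on $G$, contradicting (1). Since neither (1) nor (2) can hold for the discontinuous bounded function $\alpha$, the hypothesis is contradicted, so $\psi$ is continuous at $e$ and $f$ is right uniformly continuous.

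The main obstacle is the localisation step: the open sets $O_U$ have to be chosen so as to simultaneously witness, via (c1), that $f(\cdot\,x_U)$ is nearly constant on $O_U$ and to stay off the accumulation point $c$, and the power bookkeeping ($W^{6}=(W^{2})^{3}$, regularity of $\eta$, cofinality of $B$) must be set up before the o-radial extraction so that the separation from Lemma 2.3 and the single neighbourhood $U^{*}$ coming from proximal continuity collide in the two estimates for $f(hx_{j_\xi})$. Everything else — the opening reductions, the identity $\alpha=\rho(\psi,\psi(e))$, the bookkeeping with (c1)–(c3), and the concluding appeal to Katetov's theorem — is routine once this step is in hand.
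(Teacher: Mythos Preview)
Your proof is correct and follows the paper's strategy closely: proceed by contradiction, produce a bounded discontinuous real function $\alpha=\theta\circ\psi$ on $G$, invoke the hypothesis to get $A$, and in each case run the same extraction combining relative o-radiality, Lemma~2.3 applied with $W^2$ (so that $(W^2)^3=W^6$), and a single use of right proximal continuity to reach the collision---your only organisational difference is that you isolate this extraction as a reusable ``localisation step'' (applied with $c=e$, $S=AA^{-1}$ in case~(1) and $c=a$, $S=A$ in case~(2)), whereas the paper writes out case~(1) in full and indicates case~(2) by translation to $Aa^{-1}$. One small remark: your appeal to Hausdorffness to guarantee $c\notin\overline{O_U}$ is not needed, since (as in the paper's argument for $e\notin\overline{O_V}$) this already follows from (c1) and the assumption $(f(g_Ux_U),f(cx_U))\notin W^6$; this matters because the theorem does not assume $G$ is Hausdorff.
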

\begin{proof}  We have to show that the function $\psi: g\in G\to \psi(g)\in Y^X$ (where $\psi(g)(x)=f(gx)$) 
is continuous.  We proceed by contradiction by supposing that $\psi$ is not continuous.
  Then, there is  a bounded   uniformly continuous
function $\theta: Y^X\to \mathbb R$ such that $\theta\circ \psi$ is not
continuous (see \cite{E}).  Let 
   $A\subset G$ satisfying at least one of the conditions (1) and (2) with
   respect to the function  $\theta\circ\psi$. In case (1),  there is no compatible uniformity
  on $G$ making uniformly continuous 
the function $\theta\circ\psi_{|{\overline{A}}} $; for, otherwise, Katetov's theorem
would give us a continuous extension of $\theta\circ\psi_{|{\overline{A}}}$. In particular, $\psi_{|{\overline{A}}}$ 
is not  right uniformly continuous. As remarked above, 
  it follows from  (c2) that     $\psi_{|A}$ is not right uniformly continuous. There is then an open and  symmetric 
$W\in\mathcal U$ such that
for every $V\in{\mathcal V}(e)$, there exist $a_V\in V$, $g_V\in A$ and $x_V\in X$ satisfying $a_Vg_V\in A$  and 
\begin{eqnarray}
(f(a_Vg_Vx_V),f(g_Vx_V))\not\in W^6.
\end{eqnarray}
 For each $V\in{\mathcal V}(e)$, let $h_V=g_Vx_V$
and define $$O_V=\{g\in G:(f(gh_V),f(a_Vh_V))\in W\}.$$  
Since the functions $g\in G\to f(gh_V)\in Y$, $V\in\mathcal V(e)$, are continuous (c1),  each $O_V$ is open in $G$. Since  $a_V\in O_V\cap Ag_V^{-1}\subset AA^{-1}$ and $a_V\in V$ for each  ${V\in\mathcal V}(e)$, it follows that
\begin{eqnarray}
e\in\overline{\bigcup_{V\in{\mathcal V}(e)} O_V\cap  (AA^{-1})}.
\end{eqnarray}
 We also have $e\not\in{\overline O_V}$, for each
$V\in{\mathcal V}(e)$. Indeed,
otherwise, there exists $g\in O_V$ such that $(f(gh_V),f(h_V))\in W$, hence $(f(a_Vh_V),f(h_V))\in W^2$ which contradicts (2.1). Since
  $AA^{-1}$  is relatively o-radial in $G$, in view of (2.2), there is 
a set $\Gamma\subset{\mathcal V}(e)$ of regular cardinal
 such that for each set $I\subset \Gamma$ of the same cardinal as $\Gamma$, we have  
  \begin{eqnarray}
 e\in\overline{\bigcup_{V\in I}\{g\in G:(f(gh_{V}),f(a_{V}h_{V}))\in W \}}.
  \end{eqnarray}
 By Lemma 2.3 and (2.1), there is $I\subset\Gamma$  such that $|I|=|\Gamma|$ (since $|\Gamma|$
 is regular) and $(f(a_{U}h_{U}),f(h_{V}))\not\in W^2$
 for every $U,V\in I$.
 Since $f$ is right 
  proximally continuous,  there exists $V\in{\mathcal V}(e)$   such that 
\begin{eqnarray}  
  f(V\{h_{U}:U\in I\})\subset W[f(\{h_{U}:U\in I\})].
\end{eqnarray}  
 By (2.3) applied to $I$, there is $U_1\in I$  such that $V\cap O_{U_1}\not=\emptyset$. Let $g\in V$
 be such that  $(f(gh_{U_1}),f(a_{U_1}h_{U_1}))\in W$ and by 
  (2.4)  let    $U_2\in I$ be so that  $(f(gh_{U_1}),f(h_{U_2}))\in W $. It follows that
 $(f(a_{U_1}h_{U_1}),f(h_{U_2}))\in W^2$, which is
 a contradiction. Therefore, $\psi$ is continuous in case (1).\par
  In  case (2), $A$ is o-radial in $G$
and   ${\theta\circ \psi}_{|\overline A}$ is discontinuous at some point $a\in A$.
 Since $\theta$ is continuous, $\psi_{|\overline A}$  is necessarily discontinuous at $a$. It follows
 from the property (c3) that $\psi_{|A}$ is discontinuous at $a$. Let $W\in\mathcal U$ be  symmetric and open such that
for every $V\in{\mathcal V}(e)$, there exist $a_V\in V$  and $x_V\in G$, such that $a_Va\in  A$  and $(f(a_Vax_V),f(ax_V))\not\in W^6$.   Taking $h_V=ax_V$ for each $V\in\mathcal V(e)$, we have  
$$e\in\overline{\bigcup_{V\in{\mathcal V}(e)}\{g\in G:(f(gh_V),f(h_V))\in W \}\cap  (Aa^{-1})}.$$
It is easy to see that $Aa^{-1}$ is relatively o-radial in $G$, therefore  the proof can be continued and concluded in the same way as in the first case. It should be noted  that Katetov's theorem  was not
used in this case. 
\end{proof}
Let $A\subset G$, where $G$ is a topological group. It is proved in \cite{BT2} that $AA^{-1}$ is relatively
o-Malykhin in $G$, provided that $A$ is left and right precompact. Thus Theorem 2.4 yields:
\begin{cor} Every topological group $G$  which is functionally generated by the collection of its precompact subsets 
is proximally fine.
\end{cor}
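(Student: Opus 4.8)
The plan is to check that a group $G$ functionally generated by its precompact subsets meets hypothesis (1) of Theorem 2.4, and then to quote that theorem. Throughout, \emph{precompact} is understood in the two-sided sense (both left and right precompact), which is the setting of the result cited from \cite{BT2}.

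First I would record the consequence of \cite{BT2}: if $A\subset G$ is precompact, then $AA^{-1}$ is relatively o-Malykhin in $G$, hence a fortiori relatively o-radial in $G$, since a countable index set has regular cardinality. Thus, for every precompact $A\subset G$, the set $\overline A$ is a member of the collection $\mathcal M$ of all sets $\overline B\subset G$ with $BB^{-1}$ relatively o-radial in $G$ that appears in Theorem 2.4(1).

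Next I would verify that $G$ is functionally generated by $\mathcal M$. Let $f:G\to\mathbb R$ be discontinuous. By hypothesis there is a precompact set $A\subset G$ such that $f_{|A}$ has no continuous extension to $G$. Then $f_{|\overline A}$ has no continuous extension to $G$ either: any continuous $g:G\to\mathbb R$ with $g_{|\overline A}=f_{|\overline A}$ would satisfy $g_{|A}=f_{|A}$, i.e.\ would be a continuous extension of $f_{|A}$, which is impossible. Since $\overline A\in\mathcal M$ by the previous step, $f$ restricts to a member of $\mathcal M$ without continuous extension to $G$. Hence $G$ is functionally generated by $\mathcal M$, which is exactly condition (1) of Theorem 2.4.

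By Theorem 2.4, $G$ is proximally fine. I do not expect a genuine obstacle here: the only substantial ingredient is the relative o-Malykhin property of $AA^{-1}$ for precompact $A$, which is imported from \cite{BT2}, while the combinatorial core of the argument is already contained in the proof of Theorem 2.5; the passage from $A$ to $\overline A$ is a routine observation about non-extendability of restrictions of a discontinuous real-valued function.
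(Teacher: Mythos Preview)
Your proof is correct and follows exactly the paper's own approach: invoke the result from \cite{BT2} that $AA^{-1}$ is relatively o-Malykhin (hence o-radial) for precompact $A$, and then apply Theorem~2.4(1). The paper's proof is a single sentence that leaves implicit the passage from ``functionally generated by precompact $A$'' to ``functionally generated by the closures $\overline{A}$ with $AA^{-1}$ relatively o-radial''; you have simply spelled this step out, which is harmless.
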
 
In view of the role played by locally compact groups in many areas of mathematics, it is worth mentioning
the following particular case of Corollary 2.6.
\begin{cor} Every locally compact topological group  
is proximally fine.
\end{cor}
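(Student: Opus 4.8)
The plan is to obtain this as a direct consequence of Corollary 2.6, by checking that every locally compact group $G$ is functionally generated by the collection of its compact subsets; since compact subsets are in particular precompact, this is enough.

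So let $f\colon G\to\mathbb R$ be a discontinuous function and fix a point $g_0\in G$ at which $f$ is not continuous. Using local compactness, I would choose a compact neighbourhood $V$ of the unit $e$ and set $K=g_0V$. Then $K$ is compact, being the image of $V$ under the homeomorphism $x\mapsto g_0x$, and it is a neighbourhood of $g_0$, since it contains $g_0W$ for any open set $W$ with $e\in W\subset V$.

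The next step is the remark that continuity is a local property: as $K$ is a neighbourhood of $g_0$ in $G$, the restriction $f_{|K}$ is discontinuous at $g_0$ when $K$ is regarded as a subspace of $G$. Consequently $f_{|K}$ admits no continuous extension to $G$: if $\tilde f\colon G\to\mathbb R$ were continuous with $\tilde f_{|K}=f_{|K}$, then $\tilde f$ would be continuous at $g_0$, and since $\tilde f$ agrees with $f$ on the neighbourhood $K$ of $g_0$, this would force $f$ to be continuous at $g_0$, a contradiction. Hence the compact set $K$ witnesses the functional generation condition for $f$, and $G$ is functionally generated by its compact subsets.

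Applying Corollary 2.6 then yields that $G$ is proximally fine. I do not expect any real difficulty here; the only point deserving attention is the equivalence between discontinuity of $f$ at $g_0$ in $G$ and discontinuity of $f_{|K}$ at $g_0$ in the subspace $K$, which is immediate because $K$ is a neighbourhood of $g_0$.
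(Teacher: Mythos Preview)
Your proposal is correct and follows exactly the route the paper intends: the paper simply declares Corollary~2.7 to be ``a particular case of Corollary~2.6'' without further argument, and you supply the routine verification that a locally compact group is functionally generated by its compact (hence precompact) subsets. In fact your argument shows the slightly stronger \emph{strong} functional generation, since $f_{|K}$ is itself discontinuous; either way Corollary~2.6 applies immediately.
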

 Recall that the group $G$ is said to SIN (or with small  invariant  neighborhoods  of  the identity) if its  left uniformity and  right uniformity
are equal. The group $G$ is said to be  FSIN (or functionally balanced) if every bounded left uniformly continuous function
 $f: G\to\mathbb R$ is right uniformly continuous. The group
 $G$ is said to be {\it strongly FSIN} if every every real-valued uniformly
 continuous function on $G$ is left uniformly continuous.  The question whether every FSIN 
group  is SIN is called Itzkowits problem and is still open. We refer the reader to \cite{BT1} for more
information; see also \cite{S} for a very recent contribution to this topic. The corollary of Theorem 2.4 that every FSIN group  is SIN provided that it is strongly functionally generated by
its  relatively o-radial subsets  
has  already been    stated (implicitly and without proof) in \cite{BT1}. This is  supplemented by the following:

\begin{cor} Every FSIN group $G$ which is functionally generated by the sets $\overline{A}\subset G$
such that $AA^{-1}$ is relatively o-radial in $G$ is a SIN group.
\end{cor}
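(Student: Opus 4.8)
The plan is to derive the corollary directly from Theorem 2.4, the point being that the FSIN condition is exactly a proximal-continuity statement about the identity map between the two one-sided uniformities. First, since $G$ is functionally generated by the sets $\overline A\subset G$ for which $AA^{-1}$ is relatively o-radial in $G$, condition (1) of Theorem 2.4 is satisfied, so $G$ is proximally fine; by the equivalence recorded just after the definition of proximal fineness, the uniform space $(G,\mathcal U_r)$ is proximally fine. Next, let $j$ denote the identity map of the underlying set $G$, viewed as a map $j:(G,\mathcal U_r)\to(G,\mathcal U_l)$. A bounded function $g:G\to\mathbb R$ is uniformly continuous on the target $(G,\mathcal U_l)$ exactly when it is left uniformly continuous, and then $g\circ j=g$ is uniformly continuous on the source $(G,\mathcal U_r)$ exactly when it is right uniformly continuous; hence $j$ is proximally continuous precisely when every bounded left uniformly continuous real function on $G$ is right uniformly continuous, that is, precisely when $G$ is FSIN. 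By hypothesis $G$ is FSIN, so $j$ is proximally continuous.

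Now I would invoke proximal fineness of $(G,\mathcal U_r)$: the proximally continuous map $j:(G,\mathcal U_r)\to(G,\mathcal U_l)$ must be uniformly continuous. Unravelling the definition, for every entourage $V\in\mathcal U_l$ there is $U\in\mathcal U_r$ with $U\subset V$, i.e. $\mathcal U_l\subset\mathcal U_r$. Finally, the inversion $g\mapsto g^{-1}$ is a uniform isomorphism between $(G,\mathcal U_r)$ and $(G,\mathcal U_l)$ (it carries the basic entourage $\{(x,y):xy^{-1}\in W\}$ onto $\{(x,y):x^{-1}y\in W\}$ for $W\in\mathcal V(e)$), so applying it to the inclusion $\mathcal U_l\subset\mathcal U_r$ yields $\mathcal U_r\subset\mathcal U_l$ as well. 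Therefore $\mathcal U_l=\mathcal U_r$, which is exactly the assertion that $G$ is a SIN group.

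The proof carries no real difficulty beyond Theorem 2.4, which supplies the proximal fineness of $(G,\mathcal U_r)$; everything else is bookkeeping with the definitions. The only points deserving a little care are to use the rephrasing of FSIN in the correct direction — it gives proximal continuity of $j$ as a map \emph{out of} $(G,\mathcal U_r)$, hence, after applying proximal fineness, the inclusion $\mathcal U_l\subset\mathcal U_r$ — and then to remember that the inversion symmetry is needed in order to upgrade this one inclusion to the equality $\mathcal U_l=\mathcal U_r$ defining SIN.
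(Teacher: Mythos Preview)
Your proof is correct and is exactly the intended derivation: the paper states Corollary~2.8 without proof, as an immediate consequence of Theorem~2.4, and your argument (proximal fineness from Theorem~2.4, FSIN as proximal continuity of the identity $(G,\mathcal U_r)\to(G,\mathcal U_l)$, hence $\mathcal U_l\subset\mathcal U_r$, then symmetry via inversion) is the natural way to spell this out. One tiny remark: the passage you cite ``just after the definition'' is already the definition itself---$G$ proximally fine \emph{means} $(G,\mathcal U_r)$ is proximally fine---so no equivalence is needed there.
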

 To conclude this section, we would like to take this opportunity to  comment on the  parenthesized question of  \cite[Question 6]{BT1}  whether every
 bounded topological group is FSIN. The answer is of course no, since 
 FSIN 
 is a hereditary property (by Katetov's theorem)  and  every
 group is isomorphic both algebraically and topologically to a subgroup of a bounded group 
 \cite{HM} (see also \cite{FR}). 
\section{Examples}

In this section, we give some examples of non-proximally fine Hausdorff topological groups 
 and  examine their behavior towards the FSIN property.  The set of positive integers is denoted by $\mathbb N$ and $\mathcal U$ is the  Samuel uniformity
of the uniform discrete space 
$\mathbb N$. The uniformity $\mathcal U$ is sometimes called the precompact reflection
of the uniform discrete space $\mathbb N$. A basis of $\mathcal U$
is given by the sets $\cup_{i\leq n}A_i\times A_i$, where $A_1,\ldots A_n$
is a partition of the integers. Let  $\mathbb N^{\mathbb N}$ be endowed with the uniformity $\mathcal  V$
of uniform convergence
when 
$\mathbb N$ (the target space) is equipped with the uniformity $\mathcal  U$. Let 
 $G$ denote   the permutation group  of the set $\mathbb N$ of positive integers and  let  $S$ be the normal subgroup
 of $G$ given by finitary permutations $g\in G$; that is, $g\in S$ iff  the set  ${\rm supp}(g)=\{x\in\mathbb N:g(x)\not=x\}$  is finite. It is easy to see that $G$ (hence  $S$)
is a topological  group when equipped with the topology $\tau$ induced by the uniformity $\mathcal  V$.  More precisely, $G$
is a non-Archimedean group, since  a basis of neighborhoods of its unit  is given by the subgroups
of $G$  of the form
$$H_\pi=\{g\in G: g(A_i)=A_i, i=1,\ldots,n\},$$
where $\pi=\{A_1,\ldots,A_n\}$ is a finite partition of $\mathbb N$. \par

The so-called natural Polish topology $\tau_0$ on $G$, given by  pointwise convergence,  is coarser than $\tau$
and for every partition $\pi$ of $\mathbb N$ the set  $H_\pi$ is $\tau_0$-closed. This is to
say that $\tau_0$ is  a cotopology for $\tau$ in the sense
of \cite{AGD}; in particular, $(G,\tau)$   is  submetrizable and Baire. In what follows, unless otherwise stated, the groups $G$ and  $S$ will be systematically considered under the topology $\tau$. 
 \par
For later use, we check that the quotient group $G/S$ (with the quotient topology)
is Hausdorff, that is, $S$ is closed in $G$. Let $g\in G\setminus S$. A simple induction allows to construct
an infinite set $A\subset\mathbb N$ such that $g(A)\subset \mathbb N\setminus A$ (alternatively, $A$
is obtained from  Lemma 2.3 applied to the set $\{(x,g(x)):  x\in{\rm supp}(g) \}$). 
 Let $\pi=\{A,\mathbb N\setminus A\}$. 
Then $gH_{\pi}$ is a $\tau$-neighborhood of
$g$ and for each $h\in H_{\pi}$, $gh(A)=g(A)\subset \mathbb N\setminus A$. Hence $gH_{\pi}\cap S=\emptyset$.\par
Recall that for a topological group $H$,  the lower uniformity ${\mathcal U}_l\wedge{\mathcal U}_r$ on $H$ is 
called the Roelcke uniformity and has   base consisting of the sets
$\{(x,y)\in H\times H: x\in VyV\}$, $V\in{\mathcal V}(e)$ (see \cite{RD}). Every (right) proximally
fine group is proximally fine with respect to the Roelcke uniformity;  therefore,
the following shows in a strong way that the groups $G$ and $S$ are  not proximally fine: 
\begin{proposition}  Let $k\in \mathbb N$ and $\phi: G\to\mathbb N$ be  the evaluation function
$\phi(g)=g(k)$, $\mathbb N$  being equipped with the discrete uniformity. 
\begin{itemize}
\item[{\rm (1)}] The function
$\phi: G\to\mathbb N$  is left uniformly continuous and right proximally continuous. In particular,
$\phi$ is Roelcke-proximally continuous.
\item[{\rm (2)}] If $\mathbb N$ is endowed with the uniformity $\mathcal U$,
then $\phi$ is right uniformly continuous. Conversely, if $\mathcal V$ is uniformity on $\mathbb N$
such that 
the restriction of $\phi_{|S}: S\to (\mathbb N,\mathcal V)$ is  right uniformly continuous,
then $\mathcal V\subset \mathcal U$. 
\end{itemize}
In particular, $G$ and $S$ are not Roelcke proximally fine.
\end{proposition}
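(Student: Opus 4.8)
The plan is to work throughout with the basis of neighbourhoods of the unit of $G$ consisting of the subgroups $H_\pi=\{g\in G:g(A_i)=A_i,\ i=1,\dots,n\}$ for $\pi=\{A_1,\dots,A_n\}$ a finite partition of $\mathbb N$, and to translate each assertion about $\phi$ into one about how permutations move the single point $k$, since $\phi(g)=g(k)$.

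For (1): $\phi$ is left uniformly continuous because, with $\pi_0=\{\{k\},\mathbb N\setminus\{k\}\}$, the subgroup $H_{\pi_0}$ is the stabilizer of $k$, so $g^{-1}h\in H_{\pi_0}$ forces $h(k)=g(k)$; since the target is discrete this is exactly left uniform continuity. That $\phi$ is right proximally continuous follows from the variant of Proposition 2.1 recorded in Section 2 (applied to the left action of $G$ on itself): it suffices, the target being discrete, to find for each $A\subseteq G$ a finite partition $\pi$ with $\phi(H_\pi A)\subseteq\phi(A)$, and $\pi=\{B,\mathbb N\setminus B\}$ with $B=\phi(A)$ works, since every $\sigma\in H_\pi$ maps $B$ into itself, so $(\sigma a)(k)=\sigma(a(k))\in B$ for all $\sigma\in H_\pi$ and $a\in A$. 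Finally, the clause ``in particular'' rests on the standard fact that a map from a topological group into a uniform space is uniformly continuous for the Roelcke uniformity iff it is uniformly continuous for both the left and the right uniformity --- the nontrivial direction being that if $W$ is symmetric with $W\circ W$ inside a given entourage and $V_1,V_2$ are the neighbourhoods supplied by left and right uniform continuity, then $V=V_1\cap V_2$ does it: from $g=v_1hv_2$ put $w=v_1h$, so that $wh^{-1}=v_1\in V_2$ and $w^{-1}g=v_2\in V_1$. Applying this to $\psi\circ\phi$ for every bounded uniformly continuous $\psi:\mathbb N\to\mathbb R$, which is left uniformly continuous (as $\phi$ is, $\psi$ being trivially uniformly continuous on a discrete space) and right uniformly continuous (as $\phi$ is right proximally continuous), yields that $\phi$ is Roelcke proximally continuous.

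For (2): with $\mathbb N$ carrying $\mathcal U$, the basic entourage $E_\pi=\bigcup_{i\le n}A_i\times A_i$ is matched by $H_\pi$: if $gh^{-1}\in H_\pi$ and $h(k)\in A_i$, then $g(k)=(gh^{-1})(h(k))\in A_i$, so $(\phi(g),\phi(h))\in E_\pi$; hence $\phi$ is right uniformly continuous. For the converse, let $\mathcal V$ be a uniformity on $\mathbb N$ making $\phi_{|S}$ right uniformly continuous and let $E\in\mathcal V$; one must show $E\supseteq E_\pi$ for some $\pi$. Choose, by right uniform continuity of $\phi_{|S}$, a finite partition $\pi=\{A_1,\dots,A_n\}$ such that $g,h\in S$ and $gh^{-1}\in H_\pi$ imply $(g(k),h(k))\in E$. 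Fix a block $A_i$ and $p,q\in A_i$: the transposition $\sigma=(p\ q)$ is finitary and lies in $H_\pi$, so taking a finitary $h$ with $h(k)=q$ and $g=\sigma h\in S$ gives $gh^{-1}=\sigma\in H_\pi$ and $g(k)=p$, whence $(p,q)\in E$; as $i,p,q$ vary this shows $E\supseteq E_\pi$, so $\mathcal V\subseteq\mathcal U$.

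I expect this converse to be the crux: the other three assertions are immediate once the $H_\pi$-basis is written down, while here one has to realize an arbitrary transposition within one block of any prescribed finite partition by a \emph{finitary} permutation, in order to defeat every candidate modulus of right uniform continuity. The final sentence then follows at once: by (1) the map $\phi$ --- and equally $\phi_{|S}$, since left uniform continuity, right proximal continuity and hence Roelcke proximal continuity pass to the restriction to the subgroup --- is Roelcke proximally continuous; were $G$ (respectively $S$) Roelcke proximally fine, $\phi$ (respectively $\phi_{|S}$) would be Roelcke, hence right, uniformly continuous into the discrete space $\mathbb N$, and the converse in (2) with $\mathcal V$ the discrete uniformity would put the diagonal of $\mathbb N$ into $\mathcal U$ --- impossible, as $\mathbb N$ is infinite and no $E_\pi$ lies inside the diagonal. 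Thus neither $G$ nor $S$ is Roelcke proximally fine.
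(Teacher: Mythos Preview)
Your proof is correct and follows essentially the same approach as the paper: the same partition $\{\phi(A),\mathbb N\setminus\phi(A)\}$ for right proximal continuity, and the same observation that $gh^{-1}\in H_\pi$ forces $g(k),h(k)$ into a common block for the first half of (2). The only real difference is in the converse of (2): the paper argues by contradiction, assuming $V\in\mathcal V\setminus\mathcal U$, refining the given partition so that $A_1=\{k\}$, picking $a,b$ in some other block with $(a,b)\notin V$, and exhibiting the explicit pair $g=(k\ a)$, $h=(k\ b\ a)$; you instead argue directly that $E_\pi\subset E$ by realising any pair $p,q$ in a common block as $(g(k),h(k))$ via the transposition $\sigma=(p\ q)\in H_\pi$ and any finitary $h$ with $h(k)=q$. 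Both constructions are elementary, but yours is slightly cleaner in that it avoids the refinement $A_1=\{k\}$ and the $3$-cycle.
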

\begin{proof}
\noindent 1)  Clearly, $\phi$
is left uniformly continuous with respect to the natural topology $\tau_0$; since
$\tau_0\subset\tau$, $\phi$ is left uniformly continuous. To show that
$\phi$ is right proximally continuous, let $L\subset G$
and put
$\pi_0=\{A,\mathbb N\setminus A\}$, where $A=\{g(k):g\in L\}$, and let us verify that
$\phi(H_{\pi_0}L)\subset \phi(L)$. Proposition 2.1 will then conclude  the proof.  Let $h_0\in H_{\pi_0}$ and $g_0\in L$.
Then
$h_0(g_0(k))\in\{g(k):g\in L\}$, hence we can write
$h_0(g_0(k))=g(k)$ for some $g\in L$, thus $\phi(h_0g_0)\in \phi(L)$.\par
 
\noindent 2) $\phi: G\to(\mathbb N,\mathcal U)$ is right uniformly continuous, since
for every partition $\pi=\{A_1,\ldots,A_n\}$ of $\mathbb N$,  we have $(g(k),h(k))\in\cup_{i\leq n}A_i\times A_i$  provided that $hg^{-1}\in H_{\pi}$. 
For the converse, suppose that $\mathcal V\not\subset\mathcal U$ and let  $V\in\mathcal V\setminus\mathcal U$. We  will check that for any partition $\pi=\{A_1,\ldots,A_n\}$ of $\mathbb N$,
there are $g,h\in S$ having $g\in H_{\pi}h$ and $(\phi(g),\phi(h))\not\in V$.  We may suppose that
$A_1=\{k\}$ and that  $A_2$ contains  two  elements
 $a$ and $b$ such that $(a,b)\not\in V$. Define $g\in S$
by $g(k)=a$, $g(a)=k$ and $g(x)=x$ for
 $x\notin\{k,a\}$. Define also
$h\in S$ by $h(k)=b$, $h(a)=k$, $h(b)=a$ and $h(x)=x$ otherwise. Then $gh^{-1}\in H_\pi$, but $(\phi(g),\phi(h))\not\in V$.
 \end{proof}
For
 a subgroup $H$ of $G$ and $L\subset \mathbb N$,
let $H_{(L)}$ stand  for the  pointwise  stabilizers  of  $L$  in
  $H$ (that is, the set of $h\in H$ such that $h(x)=x$ for all $x\in L$).
The following extremal property of $\tau$  shows that the above examples are somehow
 optimal. 
 \begin{proposition} Let $H$ be subgroup of $G$ and $F\subset\mathbb N$
 a finite set. Let  $\tau_1$ be a group topology on $H$ and for each $k\in F$, let $\phi_k: g\in H\to g(k)\in\mathbb N$, where $\mathbb N$ is endowed with the discrete uniformity. 
   \begin{itemize}
 \item[{\rm (1)}]  If for each $k\in F$,  $\phi_k$ is right proximally continuous, 
  then $\tau$ is coarser than  $\tau_1$ on $H_{(\mathbb N\setminus HF)}$.
 \item[{\rm (2)}] If for each $k\in F$, $\phi_k$ is right uniformly continuous, then $\tau_1$   is discrete on $H_{(\mathbb N\setminus HF)}$.
\item[{\rm (3)}] If ${\tau_0}_{|H}\subset \tau_1$ and $(H,\tau_1)$ is strongly FSIN, then $\tau_1$
is discrete on $H_{(\mathbb N\setminus HF)}$. 
\end{itemize}
  \end{proposition}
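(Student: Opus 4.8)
The plan is to prove the three assertions in the order (1), (2), (3), reducing (3) to (2) at the end. Two facts will be used throughout. First, every $g\in H_{(\mathbb N\setminus HF)}$ fixes each integer outside $HF=\bigcup_{k\in F}Hk$, the union of the $H$-orbits of the points of $F$. Second, the unwindings of the two continuity notions: applying Proposition 2.1 to $\phi_k\colon(H,\mathcal U_r)\to(\mathbb N,\mathrm{discrete})$, with $\mathcal U_r$ the right uniformity of $(H,\tau_1)$, right proximal continuity of $\phi_k$ means that for every $A\subset H$ there is a $\tau_1$-neighbourhood $N$ of $e$ with $\phi_k(NA)\subset\phi_k(A)$ (as in the proof of Proposition 3.2(1)), while right uniform continuity of $\phi_k$ means that there is a $\tau_1$-neighbourhood $N$ of $e$ with $g(k)=h(k)$ whenever $gh^{-1}\in N$. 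I expect the only genuinely delicate bookkeeping to be in part (1): the orbits $Hk$ may be infinite, so one cannot argue orbit-point by orbit-point; the device is to feed proximal continuity the \emph{full} preimages $\phi_k^{-1}(A_i)$, one per block $A_i$ of a partition, so that the resulting neighbourhood automatically respects the trace of the partition on the whole orbit $Hk$, after which finiteness of $F$ and of the partition lets a finite intersection finish the job. Parts (2) and (3) are then short.

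For (1), since both topologies involved are group topologies, it suffices to show that each basic $\tau$-neighbourhood $H_\pi\cap H_{(\mathbb N\setminus HF)}$ of $e$ in $H_{(\mathbb N\setminus HF)}$, with $\pi=\{A_1,\dots,A_n\}$ a finite partition of $\mathbb N$, contains a $\tau_1$-neighbourhood of $e$. For $k\in F$ and $i\le n$ put $A^k_i=\{g\in H:g(k)\in A_i\}$, so that $\phi_k(A^k_i)=Hk\cap A_i$, and apply right proximal continuity of $\phi_k$ to $A^k_i$: this gives a $\tau_1$-neighbourhood $N^k_i$ of $e$ with $\phi_k(N^k_iA^k_i)\subset Hk\cap A_i\subset A_i$, that is, $n(y)\in A_i$ whenever $n\in N^k_i$ and $y\in Hk\cap A_i$. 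Let $N=\bigcap_{k\in F,\,i\le n}N^k_i$, a $\tau_1$-neighbourhood of $e$. Then for $n\in N$ every $y\in HF$ is carried into the block of $\pi$ containing $y$. Hence if $g\in N\cap H_{(\mathbb N\setminus HF)}$, then $g$ carries each $x\in HF$ into its own block and fixes each $x\notin HF$, so $g(A_i)\subset A_i$ for all $i$, whence $g(A_i)=A_i$ since $g$ is a bijection and the $A_i$ partition $\mathbb N$; thus $g\in H_\pi$. Therefore $N\cap H_{(\mathbb N\setminus HF)}\subset H_\pi\cap H_{(\mathbb N\setminus HF)}$, as required.

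For (2), choose for each $k\in F$ a $\tau_1$-neighbourhood $N_k$ of $e$ with $gh^{-1}\in N_k\Rightarrow g(k)=h(k)$. Taking $g=nh$ with $n\in N_k$ and $h\in H$ arbitrary gives $n(h(k))=h(k)$, and as $h$ ranges over $H$ the point $h(k)$ ranges over all of $Hk$, so $N_k$ fixes $Hk$ pointwise, i.e. $N_k\subset H_{(Hk)}$. Then $N:=\bigcap_{k\in F}N_k\subset\bigcap_{k\in F}H_{(Hk)}=H_{(HF)}$, and therefore $N\cap H_{(\mathbb N\setminus HF)}\subset H_{(HF)}\cap H_{(\mathbb N\setminus HF)}=\{e\}$, a permutation fixing both $HF$ and its complement pointwise being the identity. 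Hence $\{e\}$ is $\tau_1$-open in the subgroup $H_{(\mathbb N\setminus HF)}$, i.e. $\tau_1$ is discrete on $H_{(\mathbb N\setminus HF)}$.

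For (3), I would first observe that $\phi_k$ is left uniformly continuous for $\tau_0$: the subgroup $\{h\in H:h(k)=k\}$ is a $\tau_0$-neighbourhood of $e$, and $g^{-1}h$ lying in it forces $h(k)=g(k)$; since $\tau_0|_H\subset\tau_1$, $\phi_k$ is a fortiori left $\tau_1$-uniformly continuous. Now the strong FSIN hypothesis for $(H,\tau_1)$ --- whatever the precise convention for ``uniformly continuous'', it implies, via the left/right duality $f\mapsto(g\mapsto f(g^{-1}))$ between the two group uniformities, that every left uniformly continuous real-valued function on $H$ is right uniformly continuous --- makes each $\phi_k$ right $\tau_1$-uniformly continuous, and part (2) then gives the conclusion. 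So the whole argument rests on the organisation of (1); (2) and (3) only add the two short observations above, the single point requiring care in (3) being to pass through the left/right symmetry when invoking strong FSIN.
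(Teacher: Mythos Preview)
Your argument is correct and follows essentially the same approach as the paper. In part (1) you feed proximal continuity the preimages $\phi_k^{-1}(A_i)$ exactly as the paper does (the paper calls these $L_k$ and handles one block $A$ at a time rather than all blocks simultaneously, but this is purely organisational); part (3) is identical. The only noticeable difference is in part (2): the paper argues by contradiction, assuming $\tau_1$ is not discrete on $H_{(\mathbb N\setminus HF)}$ and producing, for an arbitrary neighbourhood $V$, elements $g=fh$, $h$ with $gh^{-1}\in V$ but $g(k)\ne h(k)$, whereas you give the direct form of the same observation by showing that the uniform-continuity neighbourhood $N_k$ already fixes the whole orbit $Hk$ pointwise --- your version is a bit cleaner but the content is the same.
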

 \begin{proof} 
 1)  We first show that for a given $A\subset \mathbb N$,
 there is a $\tau_1$-neighborhood $V_A$ of the unit (in $H$) such that $f(A\cap HF)\subset A$
 for every $f\in V_A$.  For  each $k\in F$, define $L_k=\{g\in H: g(k)\in A\}$. According to Proposition 2.1, there is a $\tau_1$-neighborhood
 $V_A$ of the unit such that for every $k\in F$,  $\phi_k(V_AL_k)\subset \phi_k(L_k)$. Let $n\in A\cap HF$ and $f\in V_A$.  Choose $g\in H$ and $k\in F$ such that $g(k)=n$. Then
  $g\in L_k$, hence $\phi_k(fg)\in \phi_k(L_k)$ and thus $f(n)\in A$. This show that $f(A\cap HF)\subset A$
  for every $f\in V_A$. It follows that for any
  partition $\pi=\{A_1,\ldots,A_1\}$ of $\mathbb N$, there is a $\tau_1$-neighborhood of
  the unit in $H_{(\mathbb N\setminus HF)}$,
  namely $V=H_{(\mathbb N\setminus HF)}\cap V_{A_1}\cap\ldots\cap V_{A_n}$, such that 
  $V\subset H_{\pi}$. Since $\tau_1$ is a group topology, it follows that $\tau$
  is coarser than $\tau_1$ on $H_{(\mathbb N\setminus HF)}$.\par
  2)  Suppose  that $\tau_1$ is not discrete on $H_{(\mathbb N\setminus HF)}$ and let $V$ be $\tau_1$-neighborhood of the unit
 in $H$. We will show that there are
 $g,h\in H$ and $k\in F$ such that $gh^{-1}\in V$ and $g(k)\not=h(k)$, contradicting
 the  right uniform  continuity of 
 $\phi_l$ for at least one   $l$ in the finite set $F$. Since for each
 $k\in F$, $\phi_k$ is $\tau_1$-continuous,  there is $f\in V\cap H_{(\mathbb N\setminus HF)}$ such that $f(k)=k$ for all $k\in F$
 and $f(a)\not=a$ for some $a\in \mathbb N$. Then $a\in HF$, hence  there are $h\in H$ and $k\in F$  such that
   $h(k)=a$. Taking $g=fh$, we get  
  $gh^{-1}\in V$ and $\phi_k(g)\not=\phi_k(h)$.\par
  3) If $(H,\tau_1)$ is strongly FSIN, then the functions $\phi_k$, $k\in F$, are right uniformly continuous
  on $H$ because they are left uniformly continuous on $(G,\tau_0)$ and ${\tau_0}_{|H}\subset \tau_1$. It follows
  form (2) that $\tau_1$ is discrete on $H_{(\mathbb N\setminus HF)}$.
  \end{proof}
\begin{lemma} Let $H$ be a subgroup of $G$, $m\in\mathbb N$   and $L\subset \mathbb N$
such that $|L\cap K|\leq m$ for each  orbit $K$ of the action of $H$ on $\mathbb N$. Then $H_{(L)}\in\tau$.
\end{lemma}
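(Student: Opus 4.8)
The plan is to exhibit, directly, a finite partition $\pi$ of $\mathbb{N}$ for which $H_{(L)}=H\cap H_{\pi}$. Since each $H_{\pi}$ is a $\tau$-open subgroup of $G$, this at once gives that $H_{(L)}$ is $\tau$-open in $H$, i.e. $H_{(L)}\in\tau$.

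The key point is that, although $L$ may be infinite and may meet infinitely many orbits, inside each orbit it has at most $m$ points, so $L$ splits into $m$ \emph{partial transversals}. Concretely, for each orbit $K$ of the action of $H$ on $\mathbb{N}$ the set $L\cap K$ is finite of cardinality $\le m$, so it has a unique increasing enumeration as a subset of $\mathbb{N}$ (no choice is needed). For $x\in L$ let $c(x)\in\{1,\dots,m\}$ be the position of $x$ in the increasing enumeration of $L\cap K$, where $K$ is the orbit containing $x$. Put $P_i=c^{-1}(i)$ for $i=1,\dots,m$; then $L=P_1\cup\cdots\cup P_m$, the $P_i$ are pairwise disjoint, and $|P_i\cap K|\le 1$ for every orbit $K$. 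Let $\pi$ be the partition of $\mathbb{N}$ consisting of the nonempty members of $\{P_1,\dots,P_m,\mathbb{N}\setminus L\}$.

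It then remains to verify that $H_{(L)}=H\cap H_{\pi}$. For the inclusion $H_{(L)}\subset H\cap H_{\pi}$: an element $g\in H_{(L)}$ fixes $L$ pointwise, hence fixes each $P_i\subset L$ setwise, and, being a bijection of $\mathbb{N}$ with $g(L)=L$, maps $\mathbb{N}\setminus L$ onto itself; thus $g$ stabilizes every block of $\pi$, so $g\in H_{\pi}$. Conversely, let $g\in H\cap H_{\pi}$ and $x\in L$, say $x\in P_i$. Since $g$ stabilizes the block $P_i$ we have $g(x)\in P_i$, and since $g\in H$ the point $g(x)$ lies in the same orbit $K$ as $x$; hence $g(x)\in P_i\cap K$. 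But $x\in P_i\cap K$ and $|P_i\cap K|\le 1$, so $g(x)=x$. Therefore $g\in H_{(L)}$.

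The argument is elementary, and the only step that requires an idea is the passage from the hypothesis ``$|L\cap K|\le m$ for every orbit $K$'' to a decomposition of $L$ into finitely many sets each meeting every orbit in at most one point; this is just an $m$-colouring of the ``same-orbit'' graph on $L$, which is a disjoint union of cliques of size $\le m$. The other essential point is that one works with $H$ (not merely with $G$): it is precisely because the elements under consideration preserve the $H$-orbits that the bound $|P_i\cap K|\le 1$ forces them to fix $L$ pointwise.
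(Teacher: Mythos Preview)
Your proof is correct and follows essentially the same approach as the paper's: both split $L$ into $m$ partial transversals $P_1,\dots,P_m$ of the orbit partition, form the finite partition $\pi=\{\mathbb{N}\setminus L,P_1,\dots,P_m\}$, and use $|P_i\cap K|\le 1$ together with $g\in H$ preserving orbits to conclude that elements of $H\cap H_\pi$ fix $L$ pointwise. The paper is slightly terser---it asserts the existence of the partition without spelling out the colouring, and only proves the inclusion $H\cap H_\pi\subset H_{(L)}$, which already suffices since $H_{(L)}$ is a subgroup containing a $\tau$-open subgroup of $H$---whereas you prove the equality $H_{(L)}=H\cap H_\pi$; the extra inclusion is harmless but not needed.
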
 
\begin{proof}   
There is  a finite partition  $A_0,\ldots, A_m$
 of $\mathbb N$ such that $A_0=\mathbb N\setminus L$ and 
 $|A_i\cap K|\leq 1$ for each $1\leq i\leq m$
and every orbit $K$. Then $H_\pi\subset H_{(L)}$. Indeed, if $f\in H_{\pi}$ and  $x\in  K\cap A_i$
with $i\geq 1$, then
$f(x)\in K$ and $f(x)\in A_i$, thus $f(x)=x$  since $|A_i\cap K|\leq 1$.  
\end{proof}

\begin{cor} Let $H$ be  a subgroup of $G$ for which  all but finitely many orbits
are finite and uniformly bounded. 
Then the discrete topology  is the only group
topology on $H$ that is both  proximally fine and finer than  $\tau_{|H}$.
\end{cor}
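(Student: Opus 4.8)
The plan is to feed the evaluation maps into Proposition~3.3 by way of Proposition~3.2, using the proximal fineness of $(H,\tau_1)$ to bridge the gap between proximal and uniform continuity, and then to invoke Lemma~3.4 together with the orbit hypothesis in order to propagate discreteness from a stabilizer subgroup to all of $H$. First note that one direction is immediate: the discrete topology on $H$ is finer than $\tau_{|H}$ and is proximally fine, since every map out of a discrete uniform space is uniformly continuous. So it remains to prove uniqueness: if $\tau_1$ is a group topology on $H$ with $\tau_{|H}\subset\tau_1$ and such that $(H,\tau_1)$ is proximally fine, then $\tau_1$ is the discrete topology.

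Fix $k\in\mathbb N$ and let $\phi_k\colon g\in H\mapsto g(k)\in\mathbb N$, with $\mathbb N$ carrying the discrete uniformity. By Proposition~3.2 the evaluation $g\in G\mapsto g(k)$ is right proximally continuous on $(G,\tau)$; restricting to the subgroup $H$, and using that the right uniformity of $H$ is induced by that of $G$, the map $\phi_k$ is right proximally continuous on $(H,\tau_{|H})$, and hence also on $(H,\tau_1)$ because $\tau_1$ is finer than $\tau_{|H}$. In other words, $\phi_k$ is a proximally continuous map from $(H,\mathcal U_r)$ --- the right uniformity of $(H,\tau_1)$ --- into the discrete uniform space $\mathbb N$. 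Since $(H,\tau_1)$ is proximally fine, $\phi_k$ is therefore right uniformly continuous on $(H,\tau_1)$; as $k$ was arbitrary, this holds for every $k\in\mathbb N$.

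Now bring in the orbit hypothesis. Let $m\in\mathbb N$ be a common bound for the cardinalities of all orbits of the action of $H$ on $\mathbb N$ except those in a finite family $\mathcal E$, and choose a finite set $F\subset\mathbb N$ meeting every orbit in $\mathcal E$. Then $L:=\mathbb N\setminus HF$ is a union of orbits, and every orbit contained in $L$ is disjoint from $\bigcup\mathcal E$ and so has at most $m$ elements; in particular $|L\cap K|\le m$ for every orbit $K$, so Lemma~3.4 shows that $H_{(L)}=H_{(\mathbb N\setminus HF)}$ is open in $(H,\tau_{|H})$, hence open in $(H,\tau_1)$. On the other hand, since each $\phi_k$ with $k\in F$ is right uniformly continuous on $(H,\tau_1)$, Proposition~3.3(2) gives that $\tau_1$ is discrete on $H_{(\mathbb N\setminus HF)}$. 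Thus there is a $\tau_1$-neighbourhood $W$ of $e$ with $W\cap H_{(\mathbb N\setminus HF)}=\{e\}$, and since $H_{(\mathbb N\setminus HF)}$ is itself $\tau_1$-open this forces $\{e\}$ to be $\tau_1$-open; being a group topology, $\tau_1$ is then discrete. The only delicate point is the transfer of right proximal continuity of the evaluation maps from $(G,\tau)$ down to $(H,\tau_1)$, which relies on the behaviour of the right uniformity under passage to a subgroup and on the stability of proximal continuity under refinement of the domain uniformity; once this is in place, the proximal fineness of $(H,\tau_1)$ supplies the essential step.
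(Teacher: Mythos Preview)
Your proof is correct and follows essentially the same route as the paper: use proximal fineness of $(H,\tau_1)$ to upgrade right proximal continuity of the evaluation maps $\phi_k$ to right uniform continuity, apply the paper's Proposition~3.2(2) to make the stabilizer $H_{(\mathbb N\setminus HF)}$ discrete in $\tau_1$, and use Lemma~3.3 to see that this stabilizer is $\tau$-open, hence $\tau_1$-open, forcing $\tau_1$ to be discrete. Note that your references are shifted by one relative to the paper's numbering (what you call Proposition~3.2, Proposition~3.3, and Lemma~3.4 are the paper's Proposition~3.1, Proposition~3.2, and Lemma~3.3, respectively).
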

\begin{proof} If $\tau_1$ is a proximally fine group topology on $H$
 finer than $\tau$, then every function
$\phi_k$ is right uniformly continuous (with respect to $\tau_1$). It
follows from Proposition 3.2(2) that $H_{(\mathbb N\setminus HF)}$ is $\tau_1$-discrete
for
some finite set $F\subset \mathbb N$ such that the cardinals of
all orbits $Hn$, $n\not\in F$, are finite and uniformly bounded. By Lemma 3.3,  $H_{(\mathbb N\setminus HF)}$
is $\tau$-open hence $\tau_1$-open, consequently, $\tau_1$ is discrete.
\end{proof}

Similarly, the next result follows from Proposition 3.2(3) and Lemma 3.3.

\begin{cor} Let $H$ be  subgroup of $G$  for which 
all but finitely many orbits
are finite and uniformly bounded. 
If $(H,\tau)$ strongly FSIN, then $(H,\tau)$ is discrete.
Moreover, if $H$  has finitely many orbits and $(H,\tau_0)$ is strongly FSIN, then $H$ is a {\rm(}closed{\rm)} discrete
subgroup of $(G,\tau_0)$.
\end{cor}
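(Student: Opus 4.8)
The plan is to closely follow the proof of Corollary 3.4, substituting Proposition 3.2(3) for Proposition 3.2(2), and to dispatch the ``moreover'' clause by observing that having finitely many orbits makes a suitably chosen $HF$ equal to all of $\mathbb N$.

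For the first assertion, I would begin from the fact that $\tau_0\subset\tau$, hence ${\tau_0}_{|H}\subset\tau_{|H}$, so that Proposition 3.2(3) applies to the group topology $\tau_{|H}$ on $H$: for every finite set $F\subset\mathbb N$, the topology $\tau$ is discrete on $H_{(\mathbb N\setminus HF)}$. Next I would fix an integer $m$ bounding the cardinalities of all but finitely many orbits of $H$ on $\mathbb N$ and choose $F$ to contain one point from each of the (finitely many) orbits of cardinality $>m$; since these orbits are finitely many, $F$ is finite, even though some of them may be infinite. Then $HF$ is a union of orbits that contains every orbit of cardinality $>m$, so $L:=\mathbb N\setminus HF$ meets each orbit of $H$ in at most $m$ points. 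By Lemma 3.3, $H_{(L)}=H_{(\mathbb N\setminus HF)}$ is a $\tau$-open subgroup of $H$; as $\tau$ is discrete on this subgroup, the singleton $\{e\}$ is $\tau$-open, and therefore $(H,\tau)$ is discrete.

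For the ``moreover'' clause, assume $H$ has finitely many orbits and $(H,\tau_0)$ is strongly FSIN. I would apply Proposition 3.2(3) with $\tau_1={\tau_0}_{|H}$ (the inclusion ${\tau_0}_{|H}\subset\tau_1$ being trivial) and with $F$ a finite set meeting every orbit of $H$, which exists precisely because there are only finitely many orbits. Then $HF=\mathbb N$, so $H_{(\mathbb N\setminus HF)}=H_{(\emptyset)}=H$, and Proposition 3.2(3) yields that $\tau_0$ is discrete on $H$. Since $(G,\tau_0)$ is Hausdorff (indeed Polish) and a discrete subgroup of a Hausdorff topological group is closed, $H$ is a closed discrete subgroup of $(G,\tau_0)$.

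Everything here is routine; the only point demanding a little attention is the choice of $F$ in the first part, where one must check that a single finite $F$ simultaneously feeds Proposition 3.2(3) and fulfils the hypothesis of Lemma 3.3. This is exactly what ``all but finitely many orbits are finite and uniformly bounded'' delivers, since it allows the finitely many (possibly infinite) exceptional orbits to be absorbed into $HF$ at the cost of only finitely many points. I do not anticipate any genuine obstacle beyond this bookkeeping.
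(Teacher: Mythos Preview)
Your proof is correct and follows exactly the approach the paper indicates (``Similarly, the next result follows from Proposition 3.2(3) and Lemma 3.3''), mirroring the argument of Corollary 3.4 with Proposition 3.2(3) in place of 3.2(2). Your handling of the ``moreover'' clause---choosing $F$ to meet every orbit so that $HF=\mathbb N$ and hence $H_{(\mathbb N\setminus HF)}=H$, which makes Lemma 3.3 unnecessary there---is the intended reading, and the closing appeal to the standard fact that discrete subgroups of Hausdorff groups are closed is fine.
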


It follows Proposition 3.1 that $G$ and $S$
are not strongly FSIN, hence not SIN, but
this does not allows us to conclude that $G$ and $S$ are not FSIN, because none of the functions
$\phi_k$, $k\in\mathbb N$, is
bounded. For a topological group $H$, let $R(H)$, respectively $U(H)$,  stand  for the real Banach spaces
of bounded right uniformly continuous and of bounded right and left uniformly continuous functions
on $H$. 
\begin{proposition} The  groups $G$, $S$ and $G/S$ are not FSIN. Moreover,
the density character of the quotient  Banach space $R(G/S)/U(G/S)$ is at least $2^{\mathfrak{c}}$.
\end{proposition}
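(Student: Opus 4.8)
The plan is to handle $G$ and $S$ by one elementary bounded function and to treat the quotient $G/S$ — where the density assertion lives — with an ultrafilter construction. For $G$ and $S$ I would use the two‑valued function $\psi$ given by $\psi(g)=1$ if $g(1)<g(2)$ and $\psi(g)=0$ otherwise. It is bounded, and it is left uniformly continuous: if $\pi$ is a finite partition of $\mathbb N$ having $\{1\}$ and $\{2\}$ among its blocks, every $\sigma\in H_\pi$ fixes $1$ and $2$, so $\psi(g\sigma)=\psi(g)$. But $\psi$ is not right uniformly continuous, since right invariance modulo some $H_\pi$ would force $\sigma(a)<\sigma(b)\iff a<b$ for all distinct $a,b$ and all $\sigma\in H_\pi$ — recall that $(g(1),g(2))$ already runs over all ordered pairs of distinct integers as $g$ ranges over $S$ — i.e.\ every member of $H_\pi$ would be order preserving and hence equal to $e$, which is impossible because no basic neighbourhood $H_\pi$ of the unit is trivial (a finite partition has an infinite block, carrying nontrivial finitary permutations). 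Thus $G$ and $S$ are not FSIN; the point absent from Proposition 3.1 is exactly that $\psi$ is bounded.

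Since $\psi$ does not descend to $G/S$, there I would argue with ultrafilters. Fix an infinite, co‑infinite $A\subseteq\mathbb N$ (say the even integers) and, for a free ultrafilter $p$ on $\mathbb N$, put $f_p(g)=1$ if $g^{-1}(A)\in p$ and $f_p(g)=0$ otherwise. A finitary permutation changes $g^{-1}(A)$ only by a finite set, so $f_p$ is constant on each coset $gS$ and descends to $G/S$; it is bounded; and it is right uniformly continuous, with $H_{\{A,\mathbb N\setminus A\}}$ as witness (one checks that $\overline{gh^{-1}}$ lying in the image of $H_{\{A,\mathbb N\setminus A\}}$ in $G/S$ forces $g^{-1}(A)$ and $h^{-1}(A)$ to differ by a finite set). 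Composing $f_p$ with the inverse map — a self‑homeomorphism of $G/S$ interchanging the left and right uniformities, $S$ being normal — yields the bounded left uniformly continuous function $\bar g\mapsto \mathbf 1[g(A)\in p]$, which is not right uniformly continuous: right invariance modulo $H_\pi$ would say $\sigma(p)=p$ for all $\sigma\in H_\pi$, i.e.\ $H_\pi\subseteq \mathrm{Stab}_G(p)$, whereas a permutation preserving a finite partition can always be chosen to carry a prescribed member of $p$ out of $p$. Hence $G/S$ is not FSIN.

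For the density character of $R(G/S)/U(G/S)$ I would prove that the family $\{\,f_p : p \text{ a free ultrafilter on } \mathbb N\,\}\subseteq R(G/S)$ is $\tfrac14$‑separated modulo $U(G/S)$; since there are $2^{\mathfrak c}$ such $p$, this gives the bound. Fix $p\neq q$ and set $\Phi=f_p-f_q$. The crux is that for every finite partition $\pi$ there are cosets $\bar g_1,\bar g_2$, close in the left uniformity of $G/S$ at the level of $H_\pi$, with $\Phi(\bar g_1)=1$ and $\Phi(\bar g_2)=0$; granting this, if $u\in U(G/S)$ and $\pi$ is chosen (by left uniform continuity of $u$) so that $H_\pi$‑closeness keeps $u$‑values within $\tfrac12$, then $\|\Phi-u\|_\infty\geq \tfrac14$, for otherwise $u(\bar g_1)-u(\bar g_2)>\tfrac12$. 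To get the crux, given $\pi$ I would choose an infinite, co‑infinite $B_1\in p\setminus q$ and an infinite, co‑infinite $D_0\in p\cap q$ with the same $\pi$‑profile, i.e.\ $|B_1\cap A_i|=|D_0\cap A_i|$ and $|A_i\setminus B_1|=|A_i\setminus D_0|$ for every block $A_i$ of $\pi$; then a permutation $\sigma\in H_\pi$ with $\sigma^{-1}(B_1)=D_0$ exists, and choosing $g_1$ with $g_1^{-1}(A)=B_1$ and setting $g_2=g_1\sigma$ works, since $\bar g_1^{-1}\bar g_2=\bar\sigma$ lies in the image of $H_\pi$ while $g_2^{-1}(A)=\sigma^{-1}(B_1)=D_0$.

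The main obstacle is producing that $D_0$: this uses the fact that two distinct free ultrafilters share an infinite, co‑infinite set even after restriction to the block(s) of $\pi$ on which they concentrate, and it needs a split into the case where $p$ and $q$ concentrate on the same block of $\pi$ and the case where they concentrate on different blocks, matching profiles in each. A secondary, routine‑but‑delicate point is checking that $f_p$ is right uniformly continuous on the quotient $G/S$ rather than merely on $G$. Everything else — the left/right (dis)continuity of $\psi$ and of $\bar g\mapsto\mathbf 1[g(A)\in p]$, and extracting the density bound from a $\tfrac14$‑separated set of size $2^{\mathfrak c}$ — is routine.
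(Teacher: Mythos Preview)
Your approach is essentially the paper's: the same two–valued comparison function $\mathbf 1[g(1)<g(2)]$ handles $G$ and $S$, and the same ultrafilter functions $g\mapsto \mathbf 1[g^{-1}(A)\in p]$ handle $G/S$ and the density bound. The only cosmetic differences are that the paper works with $\psi_p+\psi_q$ (values in $\{0,2,4\}$) rather than your $f_p-f_q$, and builds the two left–close witnesses by an explicit bare-hands construction rather than via your $\pi$-profile matching; your separate derivation of ``$G/S$ not FSIN'' via the inversion map is redundant once the density result is in hand, but harmless.
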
 
\begin{proof} We shall exhibit, in (1) below,
a real-valued bounded function which is left uniformly continuous on $G$
but not right uniformly continuous when restricted to $S$. It will follows
that $G$ and $S$ are  not FSIN. As for $G/S$, our strategy is as follows: For each
nonprincipal ultrafilter $p$ on $\mathbb N$, we shall give in (2) a bounded  right uniformly continuous $\phi_p$
defined on $G$ which is not left uniformly continuous.  This function is in addition constant on every  coset 
of $S$, so it factorizes to $G/S$. Then, we show that
for each bounded left uniformly continuous function $\psi$ on $G$,
we have $||\psi_p+\psi_q+\psi||\geq 1$ for any distinct nonprincipal ultrafilters $p$, $q$. 
This will imply that the Banach space $R(G/S)/U(G/S)$ contains a uniformly discrete set
of the same cardinality as $\beta\mathbb N\setminus \mathbb N$, since the quotient map $G\to G/S$  is both left and right uniformly continuous.\par
 \noindent (1) Let  $\chi: G\to\{0,1\}$ be the function
defined by  $\chi(f)=1$ if $f(1)\leq f(2)$ and $\chi(f)=0$ otherwise.
 Then, clearly,  $\chi$ is left  uniformly continuous. Let us show that
 it is not right uniformly continuous on $S$. Let
 $A_1,\ldots, A_n$ be a  partition of $\mathbb N$. We may suppose that 
  $A_1=\{a,b\}$  with $a\not=b$. 
 Define $f$ and $g$  $S$ by $f(1)=g(2)=a$,  $f(2)=g(1)=b$, 
 and  $f(x)=g(x)$ for $x\not\in \{1,2\}$.   Then $f^{-1}(A_i)=g^{-1}(A_i)$
 for each $i=1,\ldots,n$, but $\chi(f)\not=\chi(g)$.\par
\vskip 2mm
\noindent (2) Let $p$ be a nontrivial ultrafilter on
$\mathbb N$ and fix an infinite  $A\subset \mathbb N$ such that $\mathbb N\setminus
A$ is infinite.  Let $\psi_p: G\to\{0,2\}$ be the function given by $\psi_p(f)=2$
if $f^{-1}(A)\in p$. Clearly, the function $\psi_p$ is  bounded and right  uniformly continuous. 
To show that $\psi_p$ is constant on every coset of $S$, let $g\in G$
and $f\in S$.
  Then, for every $B\subset\mathbb N$, $g^{-1}(B)\setminus {\rm supp}(f) \subset (gf)^{-1}(B)$.
Thus, taking $B=A$ if $g^{-1}(A)\in p$ or $B=\mathbb N\setminus A$ if not, we get that
$\psi_p(gf)=\psi_p(g)$.  \par
 Let $p$ and $q$ be two distinct nonprincipal ultrafilters on $\mathbb N$
 and  let us verify that $||\psi_p+\psi_q+\psi||\geq 1$ for each
  bounded left uniformly continuous $\psi: G\to\mathbb R$.
 It will follows that the quotient $R(G/S)/U(G/S)$ contains norm 1 discrete  copy
 of $\beta \mathbb N\setminus \mathbb N$. Let $\varepsilon>0$ and   $\pi=\{B_1,\ldots, B_n\}$ be a partition 
of $\mathbb
N$ such that $|\psi(f)-\psi(g)|<\varepsilon$ for every $f,g\in G$
such that $g\in fH_{\pi}$. We may suppose that
  $B_1=C\cup D$ with $C\in p$,  $D\in q$ and $C\cap D=\emptyset$. Write
  again $D=D_1\cup D_2$, where $D_1$ and $D_2$ are infinite, disjoint
  and $D_1\in q$. Finally, let $\{E,F,K\}$ be a partition of
  of $\mathbb N\setminus A$, with $E$ and $F$ infinite and $|K|=|\mathbb N\setminus B_1|$. There 
  are certainly
   $f,g\in G$ such that $f(C)=A$, $f(D)=E\cup F$,  $g(C)=F$, $g(D_1)=E$, $g(D_2)=A$
   and $f=g$ on $\mathbb N\setminus B_1$.
We have $f^{-1} g\in H_\pi$ (i.e, $f(B_i)=g(B_i)$ for each $i\leq n$), 
$\psi_p(f)+\psi_q(f)=2$ and  $\psi_p(g)+ \psi_q(g)=0$. Since
$|\psi(f)-\psi(g)|<\varepsilon$, it follows
that $|\psi_p(f)+\psi_q(f) +\psi(f)|\geq 1-\varepsilon$ or  $|\psi_p(g)+\psi_q(g) +\psi(g)|\geq 1-\varepsilon$. Since $\varepsilon$
is arbitrary, we have $||\psi_p+\psi_q+\psi||\geq 1$.
\end{proof}
 The fact that the Hausdorff group $G / S$  is not discrete  was established and used by Banakh {\it et al.} in \cite{BGP} to answer a question by Dikranjan 
 in \cite{DB}. 
Knowing that the symmetric group $G$ and its finitary subgroup $S$  are highly nonabelian (their centers are trivial) and taking Corollary 3.5 into account, we are naturally led to conclude by asking the following:

\begin{quest} Is there a Hausdorff topological group that
is abelian  {\rm (}or at least  SIN{\rm)} and non-proximally fine?
\end{quest}


\begin{thebibliography}{99}
\bibitem{AGD}J.M. Aarts, J. de Groot, R.H. McDowell,
{\it Cotopology for metrizable spaces},
Duke Math. J. 37 (1970) 291--295. 
\bibitem{Av1} A.V. Arkhangel'ski\v \i, {\rm Topological  Function
spaces}, Vol. 78, Kluwer Academic, Dordrecht, 1992.
\bibitem{Av2} A.V. Arhangel'ski\v \i, {\it Some properties of radial
spaces}, Math. Notes Russ. Acad. Sci. 27 (1980) 50--54.
\bibitem{BGP} T. Banakh, I. Guran and I. Protasov, {\it Algebraically determined topologies on permutation groups}, Topology Appl. 159 (2012) 2258--2268.
\bibitem{BT1} A. Bouziad and J.-P. Troallic, {\it Problems about the uniform structures of topological groups}, in Open Problems in Topology II. Ed. Elliott Pearl. Amsterdam: Elsevier, 2007. 359--366.
\bibitem{BT2}  A. Bouziad and J.-P. Troallic,  {\it Left and right uniform structures on functionally balanced groups}, Topology Appl. 153, no 13, (2006) 2351--2361.
\bibitem{DB} D. Dikranjan and A. Giordano Bruno, {\it Arnautov's problems on semitopological isomorphisms}, Appl. Gen. Topol. 10 (1) (2009) 85--119.
\bibitem{E} R. Engelking, General Topology, Heldermann, Berlin, 1989. 
\bibitem{FR} H. Fuhr and W.  Roelcke, {\it Contributions to the theory of boundedness in uniform spaces and topological groups}, Note di Matematica, vol. 16, no. 2 (1996) 189--226.

\bibitem{HM} S. Hartman and J. Mycielski,  {\it On the Imbedding of Topologica1 Groups into Connected Topologica1 Groups}, Colloq. Math.  5  (1958)  167--169.
\bibitem{He} H. Herrlich, {\it Quotienten geordneter Raume und
Folgenkonvergenz},
Fund. Math. 61 (1967) 79--81.

\bibitem{Hu} M. Hu\v sek, {\it Ordered sets as uniformities}, Topol. Algebra Appl. 6 (2018), no. 1, 67--76.
\bibitem{I} G. L.  Itzkowitz, {\it Continuous measures, Baire category, and uniform continuity in topological groups},
Pacific J. Math. 54 (1974) 115--125.
\bibitem{K}  M. Kat\v etov, {\it On real-valued functions in topological 
spaces}, Fund. Math. 38 (1951) 85--91.

\bibitem{RD} W. Roelcke and S. Dierolof, {\rm Uniform structures in
topological
groups and their quotients}, McGraw-Hill, New York, 1981.
\bibitem{S} M. Shlossberg, 
{\it Balanced and functionally balanced P-groups},
Topol. Algebra Appl. 6, no. 1, (2018) 53--59.


\end{thebibliography}
\end{document}